\crefname{thm}{Theorem}{Theorems}
\newaliascnt{eqfloat}{equation}
\newcommand*{\ORGeqfloat}{}
\let\ORGeqfloat\eqfloat
\def\eqfloat{%
  \let\ORIGINALcaption\caption
  \def\caption{%
    \addtocounter{equation}{-1}%
    \ORIGINALcaption
  }%
  \ORGeqfloat
}
\providecommand*{\shuffle}{%
  \mathbin{\mathpalette\shuffle@{}}%
}
\newcommand*{\shuffle@}[2]{%
  % #1: math style
  % #2: unused
  \sbox0{$#1\vcenter{}$}%
  \kern .15\ht0 % side bearing
  \rlap{\vrule height .25\ht0 depth 0pt width 2.5\ht0}%
  \raise.1\ht0\hbox to 2.5\ht0{%
    \vrule height 1.75\ht0 depth -.1\ht0 width .17\ht0 %
    \hfill
    \vrule height 1.75\ht0 depth -.1\ht0 width .17\ht0 %
    \hfill
    \vrule height 1.75\ht0 depth -.1\ht0 width .17\ht0 %
  }%
  \kern .15\ht0 % side bearing
}
\newcommand{\qshuffle}{\,\underline{\shuffle}\,}
\theoremstyle{definition}
\newtheorem{thm}{Theorem}[section]
\newtheorem{prop}[thm]{Proposition}
\newtheorem{lm}[thm]{Lemma}
\newtheorem{defin}[thm]{Definition}
\newtheorem{smpl}[thm]{Example}
\newtheorem{conj}[thm]{Conjecture}
\newtheorem{rem}[thm]{Remark}
\crefname{lm}{Lemma}{Lemmas}
\crefname{thm}{Theorem}{Theorems}
\crefname{prop}{Proposition}{Propositions}
\crefname{defin}{Definition}{Definitions}
\crefname{rem}{Remark}{Remarks}
\DeclareMathOperator{\im}{im}
\DeclareMathOperator{\id}{id}
\DeclareMathOperator{\rk}{\mathrm{rk}}
\DeclareMathOperator{\spn}{\mathrm{span}}
\DeclareMathOperator{\Dsign}{\mathscr{S}}
\newcommand{\R}{\mathbb{R}}
\newcommand{\polyd}{\mathfrak A_d}
\newcommand{\polydh}{\mathfrak A_{d}^{\leq h}}
\newcommand{\KK}{\mathtt{K}}
\newcommand{\lb}{[\![}
\newcommand{\rb}{]\!]}
\newcommand{\V}{\mathcal V}
\newcommand{\U}{\mathcal U}
\newcommand{\bv}{\mathsf{b}}
\newcommand{\vv}{\mathsf{v}}
\newcommand{\vw}{\mathsf{w}}
\newcommand{\lv}{\mathfrak{l}}
\newcommand{\el}{\mathsf{e}}
\newcommand{\MS}{\mathrm{MS}}
\newcommand{\ot}{\otimes}
\newcommand{\LL}{\mathtt{L}}
\newcommand{\uw}[1]{#1}
\begin{document}

%% Title, authors and addresses
\title{Discrete signature varieties} % Subtitle
\author{Carlo Bellingeri}
\address{IECL, Université de Lorraine, France}
\email{carlo.bellingeri@univ-lorraine.fr}

\author{Raul Penaguiao}
\address{HAEUSLER AG, Switzerland}
\email{rpenaguiao@haeusler.ch}

\keywords{Free Lie algebra, Signatures, Algebraic varieties, Quasi-shuffle product}
\subjclass[2020]{14Q15, 60L70, 17B45}
\date{} % Date

\begin{abstract}
Discrete signatures are invariants computed from time series corresponding to the discretised version of the signature of paths. We study the algebraic varieties arising from their images, the discrete signature varieties. We introduce them and compute their dimension in many cases. From a particular subclass of these varieties, we derive a partial solution to the Chen-Chow theorem for complex-valued time series.

\end{abstract}

\maketitle
\tableofcontents
\section{Introduction}

Signatures of smooth curves were introduced in \cite{chen1957integration}, leading to revolutionary applications on stochastic analysis with contributions by Lyons, Friz, Hairer, and others, see \cite{lyons2007differential,frizbook,friz2020course}.

The discrete counterpart of these quantities is the \textbf{discrete signature} of a time series, or iterated sum signature, introduced by \cite{Tapia20}. From an informal point of view, these quantities capture one essential feature of a time series that is time-warping invariance. That is, any  addition of consecutive repeated terms in the time series does not change the resulting value.
%When focusing on data that is naturally discrete, several interesting applications of this new invariant arise, of which we remark signal compression %\cite{bandeira2017estimation}. 
More concretely, for any given time series $x\colon \{1, \ldots, N\}\to \KK^d $, $x = (x_1, \ldots, x_N)$ for some field $\KK$ of characteristic $0$, the discrete signature (denoted by $\Dsign(x)$)  is a special element in the space of tensor series, spanned by infinite sums over tensors of monomials. For any family  $p_1\,,  \ldots \,, p_l$ of non-constant monomials $p_i$ in $\KK[X_1, \ldots , X_d]$   the corresponding coefficient  of $\Dsign(x)$ is defined as follows:
\begin{equation*}\label{eq:dss}
\langle\Dsign(x) , p_1\otimes \cdots \otimes p_l \rangle = \sum_{1 \leq i_1 < \dots < i_l < N} p_1(x_{i_1+1} - x_{i_1}) \cdots p_l(x_{i_l+1} - x_{i_l}) \, .
\end{equation*}
Using the notation $x_j= (x^1_j\,, \ldots \,, x_j^d)$ for each value of $x$ and $[i_1\ldots i_k] $ for the monomial $X_{i_1}\ldots X_{i_k}$ we have the identities:
\begin{align*}
\langle\Dsign(x), [i]\rangle =&\sum_{k=1}^{N-1} (x^i_{k+1} - x^i_{k})= x_N^i- x_1^i\,,\\\quad \langle\Dsign(x),[ij]\rangle =&\sum_{k=1}^{N-1}  (x^i_{k+1} - x^i_{k})(x^j_{k+1} - x^j_{k})\,,\\ \langle\Dsign(x),[i]\otimes [j] \rangle =& \sum_{k = 1}^{N-1}\sum_{l = k+1}^{N-1} (x_{k+1}^i - x_{k}^i)(x_{l+1}^j - x_{l}^j)\,.
\end{align*} 
We argue that the set of tensor series formed by discrete signatures is interesting when studied from the point of view of algebraic geometry. To state it precisely, we fix an integer $h\geq 1$ and consider the projection of $\Dsign^{h}(x)$ on the space of tensors polynomials with weight equal $h$, see \cref{sec:prelim} for the notion of weight and projection. The discrete signature can be seen as a polynomial map $x\mapsto \Dsign^{h}(x)$ on the $Nd$ parameters of the time series with values in a finite-dimensional vector space. Our goal is to describe the image of $\Dsign^{h}$ after this projection. Since the image of a polynomial map is not always an algebraic set, we consider the Zariski closure of this set, and we can define the \textbf{variety of discrete signatures $\V_{d, h, N}$}. This variety is an irreducible algebraic variety that arises from an implicitization problem. That is, this variety is represented by parametric equations, and we are interested in recovering the implicit equations that determine it. This is a classical problem in algebraic geometry which is in general very difficult both from a theoretical and computational point of view, see e.g. \cite[Chapter 4]{michalek2021invitation}. The main goal of this paper is to introduce this family of varieties. In particular, we also want to describe the limit of these varieties when $N$ is large, which gives us the \textbf{universal variety $\V_{d, h}$}.

 In many aspects, our results can be related to \cite{amendola2019varieties}, but two main differences arise between the study of signatures varieties and the discrete signatures varieties.

  A first difference is that classical signatures satisfy \textbf{shuffle relations}, see \cite{chen1957integration}, whereas the latter satisfy \textbf{quasi-shuffle relations}. See \cite{hoffman2000} for an in-depth assessment of quasi-shuffles. This difference is encoded in the groups $\hat{\mathcal{G}}^{\leq h}(\polyd)$, which were studied in \cite{Bellingeri2022} in the contexts of smooth quasi-geometric rough paths.  In this paper, we explore properties of the level $h$ family of varieties $\hat{\mathcal{G}}_{d,h}$, see \cref{def:quasi-shuffle-variety}, which we can naively think as a special submanifold of $\hat{\mathcal{G}}^{\leq h}(\polyd)$ containing the algebraic varieties of interest $\mathcal{V}_{d,h}$.

A second difference between Chen's signatures and discrete signatures is related to the absence, at the moment, of a \textbf{Chen-Chow theorem} for discrete signatures, see \cite[Section 3.2]{diehl_tapia_EF_2020}. This theorem is central in \cite{amendola2019varieties} to describe the universal varieties in classical signature varieties. To better analyse the universal varieties in our discrete signatures setting,  we were able to solve the Chen-Chow theorem in the particular case $h=2$. This task was achieved by introducing the notion of \textbf{reachability}, a notion which allows us to have an equivalent formulation of the Chen-Chow theorem by solving a system of polynomial equations. Fundamental for the combinatorics of this formulation are \textbf{Lyndon words}. Specifically, we use a particular grading of the original Lyndon words from \cite{chen1958free}, yielding a different factorization theorem.  These Lyndon words with this grading arise in several places of the Hopf algebra landscape, see e.g.\cite{borga2020feasible, penaguiao2022pattern,penaguiao2020algebraic}.

Our main contributions, stated below in  \cref{thm:dimension,lm:h2constructionX}, establish the following result: First we compute the dimension of $\hat{\mathcal{G}}_{d,h}$.
\begin{thm}\label{thm:main}
Fix $d, h$ integers $\geq 1$. The dimension of the variety $\hat{\mathcal{G}}_{d,h}$ is given by 
\begin{equation}\label{eq:dimension}
\left(\sum_{l=1}^h\sum_{k|l} \frac{\mu\left(k\right)}{k}  \sum_{\alpha\in C(l/k)} \frac{1}{\ell (\alpha)} \prod_{i=1}^{\ell(\alpha)} \binom{\alpha_i + d - 1}{d - 1}\right) - 1 \,,
\end{equation}
where $\mu $ is the M\"obius function, $C(k)$ is the set of compositions of the integer $k$ and $\ell(\alpha)$ is the length of each composition.
\end{thm}
In addition to this result, we also link $\mathcal{V}_{d,h}$ with $\hat{\mathcal{G}}_{d,h}$ in a special case.
\begin{thm}\label{thm:main2}
We have the identity $\hat{\mathcal{G}}_{d,h}=\mathcal{V}_{d,h}$ in the case $d\geq 1$ and $h=2$.
\end{thm}
We conjecture that the equality $\hat{\mathcal{G}}_{d,h}=\mathcal{V}_{d,h}$ should hold for any choice of the parameters $d,h$ and a fortiori the formula \eqref{eq:dimension} should tell us the dimension of $\mathcal V_{d, h}$.

Furthermore, we also present in 
\cref{sec:computations} some computational results about the varieties $\mathcal V_{d, h,N}$  when the parameters $d,h,N$ are low.

We now state the organisation of the paper. 
Preliminaries and main definitions will be presented in \cref{sec:prelim}. After introducing the main properties of quasi-shuffle algebras in \cref{sec:computations}, we define the varieties of discrete signatures.  For some specific values $h$ and $d$, we leverage computational algebra tools to present a \textbf{ Gr\"obner basis} of the vanishing ideals of $\V_{d, h, N}$, drawing inspiration from algebraic statistics, see \cite{drton2008lectures}. 
From the explicit knowledge of a Gr\"obner basis, we compute numerically first invariants of $\V_{d, h, N}$ like dimension and degree.  Then, in \cref{sec_Lie}, we present an enumerative result for the number of Lyndon words with some fixed weight, from which we can deduce Theorem \ref{thm:main}. Finally, in \cref{sec:conj}, we prove a characterisation of the universal variety under the notion of reachability, and prove Theorem \ref{thm:main2}.

We hope in future works to describe new properties of these varieties, like a general computation of their degree and, more generally, possible extensions of the theory in new contexts, such as the study of varieties defined from time series defined over non-zero characteristic fields or even stochastic. A non-exhaustive list of possible applications and future works was added at the end of the article.

\section{Preliminaries}
\label{sec:prelim}

\subsection*{Combinatorics}
We will adopt the notation $\mathbb{N}= \{1\,, 2\,, \ldots\}$. Given a countable alphabet $I$, a \textbf{word} $w = (i_1, \dots , i_n)$ is an $n$-tuple of elements in $I$ with $n\in \mathbb{N}$. We write $w = i_1  \ldots  i_n $ for simplicity and $|w | = n$ is  called the length of $w$. We denote by  $\mathcal W (I)$ the set of words in $I$, including the empty word, which we denote by $\varepsilon$. Two words $w $ and $v$ may be concatenated, which we represent by $w v$. We call this operation \textbf{concatenation}. We always equip an alphabet $I$ with a \textbf{grading} map $\text{grad} \colon I \to \mathbb{N}$. For any word $w = i_1  \ldots   i_n$ we call the sum $\sum_{j=1}^n \text{grad}(i_j)$ the \textbf{weight}  of a  $w$ and we  denote it by $||w||$. One has that $|w|\leq ||w||$ for any non-empty word $w$ and by definition we set $|\varepsilon|=||\varepsilon||=0$.

Given a countable set $A$, a \textbf{multiset} $S \subset A$ is a collection of elements in $A$, allowing for repetitions. We denote the family of multisets of elements in $A$ by $\MS_A^0$. This includes the empty multiset $\{\emptyset \}$. We denote $\MS_A \coloneqq \MS_A^0 \setminus \{\emptyset \}$.  When $A = \{1\,, \ldots\, d\}$ for some $d\in \mathbb{N}$, we denote the sets $\MS_A^0, \MS_A$ by $\MS_d^0$, $\MS_d$, respectively. We will denote all the elements of $\MS_d$ into square brackets, e.g. $[1122334]\in \MS_4$. The alphabet $\MS_d$ has an associative and commutative operation over it, the union and a natural grading map given by the cardinality of the multiset. Note that in the context of multisets, the union counts repetitions of elements. Moreover, $\MS_d$ admits many total orders preserving the usual ordering of $\{1\,, \ldots\, d\}$. This can be easily constructed by taking a bijection between multisets of $\{1\,, \ldots\, d\}$ and monomials on $d$ variables and then applying a total monomial order such as the \textit{graded lexicographic order}, see e.g. \cite{miller2005combinatorial}.

A \textbf{composition} $\alpha$ of an integer $k$ is an $n$-tuple of positive integers $\alpha = (\alpha_1, \dots , \alpha_n)$ such that $\sum_{i=1}^n \alpha_i = k$. We denote the set of all compositions of $k$ by $C(k)$ and write $\ell(\alpha) = n$ for the \textbf{length} of the composition. We also use the following quantities on compositions $\alpha! \coloneqq \prod_{i=1}^n \alpha_i !$ and $\Pi \alpha \coloneqq \prod_{i=1}^n \alpha_i$. 

Assume now that $I=\MS_d$. If $\alpha \in C(k) $  and $w = i_1 \ldots  i_k\in \mathcal{W}(\MS_d)$ is a word such that $|w| = k$, then we define the \textbf{contracted word} $(w)_{\alpha}$ by applying the intrinsic operation of $\MS_d$ to the letters in $w$ according to $\alpha$, i.e.
\[(w)_{\alpha} = \tau_1 \ldots \tau_{\ell(\alpha)}\,,  \quad \text{with} \quad \tau_j \coloneqq [i_{s_j + 1} \cdots i_{s_j+ \alpha_j}]\, ,\] 
where $s_j = \sum_{i=1}^{j-1} \alpha_i$ for $j = 1, \dots , \ell(\alpha)$. For instance, using the same notations in the introduction, if we take $w = [1244][2][13][22]$ and $\alpha = (2, 2)$ we get $(w)_{\alpha} = [12244] [1322]$.

\subsection*{Tensor algebras}

Let $V$ be a vector space over a field $\KK$ of characteristic zero. We define  the \textbf{tensor algebra} $T(V)$ and the \textbf{tensor series} $T((V))$ as follows:
\begin{equation}\label{tensor}
T(V) \coloneqq \bigoplus_{k=0}^{\infty}V^{\otimes k} \quad \quad \quad T((V)) \coloneqq \prod_{k=0}^{\infty}V^{\otimes k}  \,,
\end{equation}
with $ V^{\otimes 0}:=\KK $.
By fixing a basis $\mathcal{B}=\{\el_i\colon i \in I\} $  of $V$ we can write elements of $T(V)$ and $T((V))$ as finite linear combinations and formal series of elements in $\mathcal W (I)$, respectively, via the identification
\[i_1 \ldots  i_k= \el_{i_1} \ot\cdots \ot \el_{i_k}\,.\]
More generally, the concatenation operation is identified with the algebraic tensor product $\ot$, which forms an algebra on both $T(V)$ and $T((V))$. We define a bilinear and non-singular duality bracket $\langle \--, \-- \rangle: T((V)) \times T(V) \to \KK$ as
\begin{equation}\label{scalar}
 \left\langle \sum_{w \in \mathcal W(I)} \alpha_{w} w, v \right\rangle = \alpha_{v} \, , 
 \end{equation}
extended linearly to $T(V)$.
In this way, we can identify $ T((V))$ with the algebraic dual of $T(V)$.

In what follows, we will always suppose that all the preimages of the grading associated to $I$ are finite sets. Then $V$ becomes a graded vector space, and we can write $V = \oplus_{h \geq 0} V^h $ for the corresponding grading. The vector space $T(V)$ also inherits a grading using the weight of words by setting
\[T^h(V) \coloneqq \spn \{  w \in \mathcal W(I) \colon\,  ||w|| = h\} \,, \quad T(V)= \bigoplus_{h=0}^{\infty}T^h(V)\,, \]
\begin{equation*}
\begin{split}
T^{\leq h}(V)= \spn \{  w \in \mathcal W(I) \colon\,  ||w|| \leq  h\}\, ,  &\quad  T^{>h}(V)=\spn \{  w \in \mathcal W(I) \colon\, \, ||w|| >h\}\,.
\end{split}
\end{equation*}

The vector space $T^{\leq h}(V) \cong T(V)/{T^{>h}(V)}$ is called the space of \textbf{truncated tensors}. 
Since $T^{>h}(V)$ is a $\otimes$ ideal, the corresponding truncated tensor product is well defined on the quotient.
We write $\ot_h$ for the product on the quotient. We always have the isomorphism
$$T^{\leq h}(V)\cong T^{\leq h}\left(V^{\leq h}\right)\, ,$$
where $V^{\leq h} = \oplus_{k=0}^h V^k$. For $k \in \KK$, let $T_k^{\leq h}(V) \coloneqq \{ \vv \in T^{\leq h}(V) \colon  \langle \vv, \varepsilon \rangle = k\}$. These vector spaces are all finite-dimensional. The same truncation procedure applies to $T((V))$ obtaining a vector space which is isomorphic to $T^{\leq h}(V)$. For the sake of simplicity, we denote this vector space with the same notation $T^{\leq h}(V)$. This identification will also be used later in the context of Hopf algebras.

\subsection*{Shuffle and quasi-shuffle Hopf algebras}
Fix $d\geq 1$ integer, and consider henceforth $\KK[X_1, \dots, X_d]$ the vector space of polynomials over $d$ indeterminates.  Our vector space of interest will be defined simply by
\[V =\polyd =  \KK[X_1, \dots, X_d]/\{P\in \KK[X_1, \dots, X_d]\colon P=a,\; a\in \KK\}\] on which we study the tensor algebra and tensor series.

The vector space $\polyd$ has a basis of non-constant monomials. These are identified with $\MS_d$. We abuse notation and refer to a multiset $p \in \MS_d$ as a monomial in the variables $X_1, \dots, X_d$. 
For example, we identify the monomial $X_1^2X_2$ with the multiset $[112]$. This abuse of notation extends to the evaluation of a monomial, thus writing $[112](y_1, y_2, y_3) = y_1^2y_2$.
%To distinguish elements of the alphabet $\MS_d$ and scalars in $\mathbb{Z}$, we use typewritter typeset for multisets.
The alphabet $\MS_d$ is graded, with degree given by the polynomial degree of the associated monic polynomial, or equivalently by the set size. 
Given any $h\geq 0$ we use the notation $\polydh:=(\polyd)^{\leq h} $.

Two products can be defined on $T(\polyd)$: the \textbf{shuffle} $\shuffle$ and the \textbf{quasi-shuffle} $\qshuffle$ products. 
We define them here recursively.
For any $p,q \in \MS_d$ and $w,v\in \mathcal{W}(\MS_d)$, we set
\begin{equation}\label{shuffle_qshuffle}
\begin{split}
w =&\varepsilon \qshuffle w = w \qshuffle \varepsilon= \varepsilon \shuffle w = w \shuffle \varepsilon\, \\
(wp)\shuffle (v  q) =& (w \shuffle  v  q) p + (w  p\shuffle v)  q\\
(wp) \qshuffle (v  q) =& (w \qshuffle  v q) p + (w p \qshuffle v)  q+ (w \qshuffle v) [pq]\,,
\end{split}
\end{equation}
where  $[pq]$ is the contracted multi-set given by the union. These relations define two commutative algebras on $T(\polyd)$ which are compatible with the grading of $T(\polyd)$ given above, see \cite[Theorem 2.1]{hoffman2000} for a proof of this fact.

The tensor algebra $T(\polyd)$ can be further equipped with two structures of \textbf{Hopf algebras} by introducing the deconcatenation coproduct $\delta\colon T(\polyd) \to  T(\polyd) \otimes T(\polyd)$ and a  counit  $\eta^*\colon T(\polyd) \to \KK$.
For a word  $w = p_1  \cdots  p_k$, we set
\[\delta (\uw{w})=\uw{\varepsilon}\otimes \uw{w} + \uw{w} \otimes \uw{\varepsilon} + \sum_{l=1}^{k-1} \uw{p_1  \ldots  p_l} \otimes \uw{p_{l+1} \ldots p_k} ,\quad \eta^*(w): =\left\{
	\begin{array}{ll}
    1 & \mbox{if } w=\varepsilon\,, \\
	0 & \mbox{otherwise.}
	\end{array}
	\right.\]
We define as well the reduced coproduct map $\tilde{\delta} = \delta - \id\ot \varepsilon - \varepsilon \ot \id$. 
This endows $(T(\polyd),\shuffle, \delta)$ and   $(T(\polyd), \qshuffle, \delta)$ with graded Hopf algebra structures.
We expect there to be no confusion between elements in $T(V)$ and $T(V)\otimes T(V)$.

These Hopf algebras were shown to be isomorphic.
Explicit algebra morphisms $\Phi_H, \Psi_H:T(\polyd) \to T(\polyd)$ were constructed in \cite{hoffman2000}, which are inverses of each other.
Specifically, the maps $\Phi_H$ and $\Psi_H$ are the linear maps that act on words as follows:
\begin{align}\label{Hoff_exp_log}
\Phi_H(\uw{w}) &\coloneqq \sum_{\alpha \in C( |w |)} \frac{1}{\alpha !} \uw{(w)_{\alpha}}\,, \quad\quad
\Psi_H(\uw{w}) \coloneqq \sum_{\alpha \in C( |w |)} \frac{(-1)^{|w| - \ell(\alpha)}}{\Pi \alpha} \uw{(w)_{\alpha}}\,.
\end{align}
For instance, we have the following identities
\begin{align*}
\Phi_H( [1][2]) =& [1][2]+ \frac{1}{2} \uw{[12]} \,, \quad\quad
\Psi_H([1][2]) = [1][2] - \frac{1}{2} [12]\,, \\
\Phi_H([1][2][3]) =&[1][2][3] + \frac{1}{2} [12][3] + \frac{1}{2} [1][23] + \frac{1}{6}[123]\,,\\
\Psi_H([1][2][3]) =& [1][2][3] - \frac{1}{2} [12][3] - \frac{1}{2} [1][23] + \frac{1}{3}[123]\,.
\end{align*}
$\Phi_H$ is a graded isomorphism from $(T(\polyd),\shuffle, \delta)$ to $(T(\polyd),\qshuffle, \delta)$ and $\Psi_H= \Phi_H^{-1}$.

The adjoints of $\Phi_H, \Psi_H$ with respect to $\langle \--, \-- \rangle$ are also explicitly described in \cite[Section 4.2]{hoffman2000}. Specifically, $\Phi_H^*, \Psi_H^* : T((\polyd)) \to T((\polyd))$  are the maps defined by the following identities:
\begin{equation}\label{eq:phistar}
\begin{split}
\Phi^*_H(\uw{p_1 \ldots  p_k}) \coloneqq \Phi^*_H(\uw{p_1}) \ldots\Phi^*_H(\uw{p_k})\,,  \;\;
&\Psi^*_H(\uw{p_1 \ldots  p_k}) \coloneqq \Psi^*_H(\uw{p_1})\ldots   \Psi^*_H(\uw{p_k})\\
 \Phi^*_H(\uw{p}) \coloneqq \sum_{n \geq 1}\frac{1}{n!}\sum_{[p_1 \cdots p_n] = p} p_1 \ldots p_n\,,  \;\;
&\Psi^*_H(p) \coloneqq \sum_{n \geq 1}\frac{(-1)^{n-1}}{n}\sum_{[p_1 \cdots p_n] = p} \uw{p_1 \ldots  p_n}\, ,
\end{split}
\end{equation}
where the sum $[p_1 \cdots p_n] = p$ means that we sum over all the possible ways to write a letter $p$ as $[p_1 \ldots  p_n]$. For instance, one has
\begin{align*}
\Phi^*_H([12]) =& [12] + \frac{1}{2} ([1][2]+[1][2])\,, \quad \quad
\Psi^*_H([12]) = [12] - \frac{1}{2} ([1][2]+[1][2])\,, \\
\Phi^*_H([123]) =& [123] + \frac{1}{2} ([12][3]+ [1][23]+ [3][12]+ [23][1]+ [13][2]+ [2][13]) \\&+  \frac{1}{6}( [1][2][3]+ [2][1][3] + [3][2][1] + [2][3][1] + [1][3][2] + [3][1][2])\\
\Psi^*_H([123]) =&  [123] + \frac{1}{2} ([12]3+ [1][23]+ [3][12]+ [23][1]+ [13][2]+ [2][13]) \\&+  \frac{1}{3}( [1][2][3]+ [2][1][3] + [3][2][1] + [2][3][1] + [1][3][2] + [3][1][2])\,. 
\end{align*}
Let $H_{\shuffle}(\polyd), H_{\qshuffle}(\polyd)$ be the Hopf algebras dual to $(T(\polyd), \shuffle, \delta)$ and $(T(\polyd), \qshuffle, \delta)$ respectively. Note that the product structure of $H_{\shuffle}(\polyd), H_{\qshuffle}(\polyd)$ is $\ot$. By simple dualisation, the adjoint maps $\Phi_H^*, \Psi^*_H$ are \textbf{graded Hopf algebra isomorphism} between $H_{\shuffle}(\polyd)$ and $H_{\qshuffle}(\polyd)$, see \textit{e.g.} \cite[Proposition 3.5]{Bellingeri2022} for further explanations. 

The maps $\Phi_H, \Psi_H, \Phi^*_H$ and $\Psi^*_H$ are graded. Therefore, for any $h\geq 0$ these maps restrict to isomorphisms of vector spaces.
For instance, $\Phi^*_H$ is an isomorphism  between the algebras $(T^{\leq h}(\polyd),\otimes_h)$ and $(T^{\leq h}(\polyd),\otimes_h)$. We display these maps in \cref{cd:level_h_lie}, in the context of two important subspaces of $T((\polyd))$, that we introduce in the next section.

In what follows, it is also useful to introduce the following shorthand notation for the combinatorial coefficients 
\[n_{d,h}= \sum_{\alpha\in C(h)}\binom{d-1 +\alpha_1}{\alpha_1}\cdots\binom{d-1 +\alpha_{\ell(\alpha)}}{\alpha_{\ell(\alpha)}}\,.\]
As shown in \cite[Remark 2.3]{Tapia20}, these coefficients describe the dimension of the vector spaces  $T^h(\polyd)$,  the ambient space for the varieties we define below.

\section{Discrete signatures and their varieties \label{sec:computations}}
We now recall the definition of discrete signature from \cite{Tapia20}.%In what follows, we consider $\KK$ to be the real or the complex field of numbers.
\begin{defin}
 Fix integers $d, N\geq 1$ and let $x = (x_1, \dots , x_N)$ be a finite time series of elements in $\KK^d$. Denoting by $\Delta x_i = x_{i+1} - x_i \in \KK^d$ the increment time series, we define the \textbf{discrete signature} $\Dsign(x)\in T((\polyd))$ as the tensor series such that $\langle\Dsign(x), \varepsilon\rangle =1$ and
\begin{equation}\label{eq:dsign_unrep}
\langle \Dsign(x) , p_1 \ldots p_k \rangle \coloneqq \sum_{1\leq i_1 < \cdots < i_k < N} p_1(\Delta x_{i_1}) \cdots p_k(\Delta x_{i_k}) \, , 
\end{equation}
for any word $p_1 \ldots p_k \in \mathcal W  (\MS_d)$.
We use the convention that $\langle \Dsign(x), p_1 \ldots p_k \rangle  = 0$ if the summing set in \eqref{eq:dsign_unrep} is empty. 
Given $h\geq 0$, we denote the projection of  $\Dsign(x)$ onto $T^{\leq h}(\polyd)$ and $T^{h}(\polyd)$ by $\Dsign^{\leq h}(x)$ and $\Dsign^{h}(x)$,  referring to them as \textbf{truncated discrete signature of weight} $h$ and \textbf{discrete signature of weight} $h$.
\end{defin}

We note that $\langle\Dsign(x), w\rangle= 0$ whenever  $|w|\geq N$, therefore $\Dsign(x)$ and its projection on the vector space $\spn \{  w \in \mathcal W(\MS_d) \colon\,  |w| <  N\}$ are the same. 

\begin{smpl}\label{ex_1}
By identifying a time series $x = (x_1, \ldots , x_N)$  with a $d\times N$ matrix whose $i$-th column $x_i$ has coefficients $(x_i^1, \ldots , x_i^d)^T$, we can efficiently describe truncated signatures. For instance, one has 
\begin{align*}
&\Dsign^{\leq 2}\left(\begin{bmatrix}
1&2&3\\
2&3&2
\end{bmatrix}\right) =\\ =\;& \varepsilon  + (3-1)\, [1] + (2-2)\,[2] + ((2-1)^2+ (3-2)^2)\, [11] \\&+ ((3-2)^2+ (2-3)^2)\, [22] + ((2-1)(3-2)+(3-2)(2-3) )\, [12]\\ & + \, (2-1)(3-2)[1][1] + (2-1)(2-3)\,[1][2]\\& + (3-2)(3-2)\,[2][1] + (3-2)(2-3) [2][2] \\ = \;&\varepsilon + 2 \, [1] + 0\, [2] + 2\, [11] + 0 \, [12]+ 2\, [22]\\
& + \, [1][1] -\, [1][2] +\, [2][1] - [2][2]\, .
\end{align*}

More generally,  we have the formula
\begin{align*}
\Dsign^{\leq 2}\left(\begin{bmatrix}
x_1^1&x_2^1&x_3^1\\
x_1^2&x_2^2&x_3^2
\end{bmatrix}\right) =& \uw{\varepsilon} + (x_3^1-x_1^1) \, [1] + (x_3^2-x_1^2)\, [2] + \\&\, ((x_2^1-x_1^1)^2+(x_3^1-x_2^1)^2) \,[11]+ ((x_2^2-x_1^2)^2+(x_3^2-x_2^2)^2)\, [22]\\&+ ((x_2^2-x_1^2)(x_2^1-x_1^1)+(x_3^2-x_2^2)(x_3^1-x_2^1))\, [12] + \\
& + (x_2^1-x_1^1)(x_3^1-x_2^1)\, [1][1] + (x_2^1-x_1^1)(x_3^2-x_2^2)\, [1][2]\\&+ (x_2^2-x_1^2)(x_3^1-x_2^1)\, [2][1]+ (x_2^2-x_1^2)(x_3^2-x_2^2)\,[2][2] \, .
\end{align*}
\end{smpl}

\begin{smpl}[The Canonical Axis time series]
For any given $d\geq 1$ we set $N=d+1$ and we consider the following time series  $x^{\text{axis}}$   defined by $x^{\text{axis}}_1=0$  and the recursive condition $x^{\text{axis}}_{i+1}= x_{i}^{\text{axis}}+ e_i$
where $e_i$ for  $i=1, \ldots, d$ is the canonical basis  of $\R^d$. In few words,  $x^{\text{axis}}$ consists of $d$ steps from the zero vector $0$ to the unit vector $(1, \ldots, 1)^T$. Since the evaluation of a monomial $[j_1\ldots j_l]$ on the vector $e_i$ is non-zero if and only if $j_i= \ldots =j_l=i$,  the entry $\langle \Dsign(x^{\text{axis}}), w\rangle$  is zero unless the word $w$ has the form  $w=[i_1\ldots i_{1}]  [i_2\ldots i_2 ]\ldots  [ i_l\ldots i_l]$  with $i_1\leq i_2 \leq \ldots \leq  i_l$.  In that case, it equals to $1$. For instance, when $d=2$ one has 
\[\langle\Dsign(x^{\text{axis}}),[11]\rangle= \langle\Dsign(x^{\text{axis}}),[22]\rangle=\langle\Dsign(x^{\text{axis}}), [1][2]\rangle=1\,,\]
\[\langle\Dsign(x^{\text{axis}}),[12]\rangle= \langle\Dsign(x^{\text{axis}}), [2][1]\rangle=\langle\Dsign(x^{\text{axis}}), [2][2]\rangle= \langle\Dsign(x^{\text{axis}}), [1][1]\rangle=0\,,\]
as one can also check from Example \ref{ex_1}.
\end{smpl}

 \begin{rem}\label{rm_lite}
The notion of discrete signature presented here coincides with the evaluation of the \textbf{iterated-sums signature} $\text{ISS}(x)_{1, N}\in  T((\polyd))$ on its initial and terminal point.  For any given  infinite time series $x\colon \mathbb{N}\to \KK^d$, the tensor series $\text{ISS}(x)_{n,m}\in  T((\polyd))$ is defined by
\begin{equation}\label{eq:ISS}
\langle \text{ISS}_{n,m}(x) , p_1\ldots p_k \rangle \coloneqq \sum_{n\leq i_1 < \cdots < i_k < m} p_1(\Delta x_{i_1}) \cdots p_k(\Delta x_{i_k}) \, , 
\end{equation}
for any $n<m$, see \cite[Definition 3.1]{Tapia20}.
In particular, one has that $\text{ISS}(x)_{n,m} = \Dsign(x_n, x_{n+1}, \ldots, x_m)$.  
\end{rem}

Discrete signatures share some general algebraic identities among their coefficients, which justify using quasi-shuffle Hopf algebras. To state them precisely, we define the concatenation of time series.
\begin{defin}
Let  $ x=(x_1, \dots , x_N)$ and $ y=(y_1, \dots , y_M)$  be two time series, we define the \textbf{concatenated time series} $x|y\colon \{1, \ldots, N+M\}\to \KK^d$ by the conditions
\begin{equation}\label{def:concatenation}
(x|y)_k:=\begin{cases}x_k &k \in \{1, \ldots,N\}\\ y_{k-N}+x_N - y_1 &k \in \{N+1, \ldots,N+M\}\end{cases} 
\end{equation}
\end{defin}

In this way, if $x = \begin{bmatrix}
1&2&3\\
1&4&9
\end{bmatrix}$ and $y = \begin{bmatrix}
0&-1&1\\
2&3&5
\end{bmatrix}$, then 

\[(x | y ) = \begin{bmatrix}
1&2&3&3&2&4\\
1&4&9&9&10&12
\end{bmatrix}\,.\]

\begin{thm}\label{thm_prop_disc_sig}
The discrete signature has the following properties:

\begin{itemize}
\item  (Time-warping invariance) for any integer $M\geq 1$ one has 
\begin{equation}\label{eq_time_warp}
\Dsign (x_1, \dots , \underbrace{x_N, \dots, x_N}_{M \; \text{times}}) = \Dsign (x_1, \dots , x_N)\,.
\end{equation}

\item (Quasi-shuffle identity)
for any two words $w, v$ in $\MS_d$ we have
\begin{equation} \label{thm:qsrels}
\langle \Dsign (x), w \qshuffle v \rangle = \langle \Dsign (x), w \rangle \langle \Dsign (x), v \rangle \, .
\end{equation}
\item (Chen's identity) For any couple of time series  $ x=(x_1, \dots , x_N)$ and $ y=(y_1, \dots , y_M)$ one has 
\begin{equation}\label{eq_chen_equation}
\Dsign(x) \otimes\Dsign (y) = \Dsign(x|y)\,,
\end{equation}
\end{itemize}
\end{thm}

\begin{proof}
The first two properties follow directly from \cite[Section 4]{Tapia20} and \cite[Theorem 3.4, Part 1]{Tapia20}.  Concerning the Chen identity, this one follows directly from the Chen property on iterated-sums signatures (see \cite[Theorem 3.4, Part 2]{Tapia20}). Indeed, using the notation of Remark \ref{rm_lite} with this result, we have 
\[\Dsign (x|y)=\text{ISS}_{1,N+M}(x|y)=\text{ISS}_{1,N}(x|y) \otimes\text{ISS}_{N,N+M}(x|y)= \Dsign (x) \otimes \Dsign (y)\,. \]
\end{proof}

\begin{rem}
From this last identity in the proof, we understand why  in the definition \eqref{def:concatenation} we need to add the factor  $x_N-y_1$ to the part associated to $y$. Indeed,  when we evaluate both sides of the identity $\Dsign (x|y)= \Dsign (x) \otimes \Dsign (y)$ along the letter $w=[i]$, we obtain on  the right-hand  $x^{i}_N- x^{i}_1+y^{i}_M- y^{i}_1$ which corresponds to $(x|y)^i_{M+N}-(x|y)^i_{1}$. Without the translation this identity would be false.
\end{rem}
\begin{rem}\label{rem_trun_char}
By projecting the identity \eqref{eq_chen_equation} on $T^{\leq h}(\polyd)$ we also obtain the truncated identity
\begin{equation}\label{trunc_chen}
\Dsign^{\leq h}(x) \otimes_h\Dsign^{\leq h} (y) = \Dsign^{\leq h}(x|y)\,.
\end{equation}
Moreover  one has 
\[
\langle \Dsign^{\leq h} (x), w \qshuffle v \rangle = \langle \Dsign ^{\leq h} (x), w \rangle \langle \Dsign^{\leq h}  (x), v \rangle \, .
\]
for all words $w$ $v$ such that $||w|| + ||v||  \leq h$.
\end{rem}
We now introduce the related varieties associated with the discrete signature. For any  fixed integer $h\geq 1$ we consider the discrete signature of weight $h$ as a \textbf{polynomial mapping}  of the underlying time series $x$, i.e.
\[\Dsign^h\colon \KK^{d\times   N}\to T^h(\polyd)\simeq \KK^{n_{d,h}}\,.\]
Any coordinate of $\Dsign^h$ is a homogeneous polynomial of degree $h$. Therefore  $\Dsign^h$ induces a well-defined rational map among projective spaces 
\[(\Dsign^h)'\colon  \mathbb{P}^{dN-1}(\KK) \dashrightarrow \mathbb{P}^{n_{d,h}-1}(\KK). \]

This is a rational map, as there can be some values $\bar{x}\in \KK^{d \times N}\setminus \{0\}$  such that $\Dsign^h(\bar{x})=0 $. 
For any element $v$ of a vector space $V$, we denote by $[v]$ its equivalence class in the associated projective space. Every subset $U\subset V$  is associated to a  subset of the projective space $\mathbb{P}(V)$ by setting
\[[U]\coloneqq \{[u] \colon  u \in U\setminus \{0\}\}\,.\]

\begin{defin}
Fix $d, h, N$ integers $\geq 1$. We define the \textbf{affine discrete signature variety} $\V_{d, h, N}^{\text{af}}$ as the Zariski closure of the image of $\Dsign^h$ and the \textbf{discrete signature variety} is defined by setting $\V_{d, h, N}:=[\V_{d, h, N}^{\text{af}}]$. 
\end{defin}

For this paper, we focus on the case $\KK=\mathbb{C}$.

\begin{rem}\label{rk_par}
An alternative way to parametrise $\V_{d, h, N}$ is obtained by expressing $\Dsign$ using the \textbf{difference time series}. 
That is, for any $y\in \KK^{d(N-1)}$ and any word $w = p_1 \ldots  p_k$, consider the map $ \Dsign_{\Delta} $ such that:
\begin{equation}\label{eq:dsign_rep}
\langle \Dsign_{\Delta}(y) , \uw{w} \rangle \coloneqq \sum_{1 \leq i_1 < \cdots < i_k <N} p_1(y_{i_1}) \cdots p_k(y_{i_k}) \, . 
\end{equation}
Note that $\im \Dsign = \im \Dsign_{\Delta}$.
This reparametrisation allows for a more tractable computer-assisted calculation, as it reduces the input space of $\Dsign$ and simplifies implementation. 
We will use this parametrisation only in explicitly describing the varieties with weight $1$, $2$ and $3$.
\end{rem}

For any $h$ and $d$, let $N =1$ and note that $\V_{d, h, 1}= \emptyset$.  We see that our variety of interest behaves well with the variation of the index $N$.

\begin{thm}\label{thm:chain}
For any pair of integers  $1\leq N\leq M$ one  has $\V_{d, h, N}\subseteq \V_{d, h, M} $. Moreover, there exists an integer $N'$ depending on $d$ and $h$ such that the following ascending chain of varieties stabilizes.
\[
 \V_{d, h, 1}\subseteq \V_{d, h, 2} \subseteq \cdots  \subseteq  \V_{d, h, N'}= \V_{d, h, N'+1}= \cdots \, \, .
\]
We call $ \V_{d, h, N'}$ the 
  \textbf{universal discrete signature variety} and we denote it by $\mathcal{V}_{d,h}$.
\end{thm}

\begin{proof}
From the time-warping invariance equation in \eqref{eq_time_warp}, the image of the $\Dsign^h$ on time series in $\KK^{d\times   N}$ is included in the image of the $\Dsign^h$ on time series in $\KK^{d\times M}$. 
This concludes the chain claim. To note that the varieties stabilise, we use the classical fact that the vanishing ideal of a parametrised variety is a prime ideal.
Indeed, if $\phi: \KK [\mathbf{y} ] \to \KK [\mathbf{x}]$ is a parametrisation of a variety $V$, its vanishing ideal $I(V) = \ker \phi$ is the kernel of a ring homomorphism on an integral domain, so $I(V)$ is prime.

By considering the associated family of ideals, we obtain a descending chain of prime ideals in a polynomial ring on finite variables. This stabilises from Krull's principal ideal theorem.
\end{proof}

\begin{rem}
An estimation of the minimal $N'$ can be done, and the proof above gives an implicit upper bound, as any chain of prime ideals is bounded by the Krull dimension of a ring. (For polynomial rings over a field, it is well known that the Krull dimension of $\KK[X_1, \cdots, X_n]$ is $n$). Therefore, the decreasing chain of prime ideals
$$\mathfrak{p}_{d, h, 1} \supsetneq \mathfrak{p}_{d, h, 2} \supsetneq \cdots \supsetneq  \mathfrak{p}_{d, h, N'}\, ,$$
has at most $n_{d,h} $ terms.
\end{rem}
We describe some general facts about $\V_{d, h, N}$.

\subsection*{The weight one varieties}

For completeness' sake, we include this case here. For $h=1$ and any field $\KK$ of characteristic zero, the map $\Dsign^1$  is a linear map  with values in  $ T^1(\polyd)= \langle [1],\ldots,[d]\rangle $. In this case, for any $N\geq 2$, we have trivially  $\V^{\text{af}}_{d, 1, N}\cong \KK^d$ and therefore $\V_{d, 1, N} \cong  \mathbb{P}^{d-1}(\KK) $.

\subsection*{The weight two varieties}
We consider now discrete signature varieties of weight $2$ when $\KK= \mathbb{C}$. Firstly, using the definition of the ambient space $T^2(\polyd)$ in \cref{sec:prelim}, we can actually write it as the direct sum
\[T^2(\polyd)= \langle \{[i] [j] \colon 1\leq i\leq j\leq d \}\rangle\oplus \langle \{[ij]\colon 1\leq  i \leq j\leq d\}\rangle\,.\]
Therefore we can easily identify every element $ T^2(\polyd)$ as the direct sum of a  $d\times d$ matrix and a  symmetric $d\times d$ matrix. 
For any time series $x$ we can describe this decomposition with the coefficients
\[\Dsign_{[i][j]}\coloneqq \langle\Dsign(x),[i] [j]  \rangle\,, \quad \Dsign_{[ij] }\coloneqq\langle\Dsign(x),[ij]\rangle\,.\]

Thanks to the quasi-shuffle identities \eqref{thm:qsrels}, we deduce that these coefficients
satisfy the relation
\[\Dsign_{[i][j]}+\Dsign_{[j][i]}+\Dsign_{[ij]}=\langle\Dsign(x),[i] \rangle\langle\Dsign(x), [j] \rangle =(x_N^i- x_1^i) (x_N^j- x_1^i)\,.\]
Since the matrix with coefficients $(x_N^i- x_1^i) (x_N^j- x_1^j) $  has rank $\leq 1$, we obtain that $\V_{d, 2, N}$ is always included in the variety of the projective equivalence class of matrices $A$ of the form $A=S+M$ with $S$ symmetric and  $M$ generic such that 
\begin{equation}\label{eq:rk1cond}
\rk (S+ M+ M^\top)\leq 1.
\end{equation}
For instance, when $d=2$ this is equivalent to imposing that the matrix
$$\begin{bmatrix}
2\Dsign_{[1][1]} + \Dsign_{[11]} &\Dsign_{[12]} + \Dsign_{[1][2]}+\Dsign_{ [2][1]}\\
\Dsign_{ [12]} + \Dsign_{[1][2]}+\Dsign_{ [2][1]}& 2\Dsign_{[2][2]} + \Dsign_{[22]}
\end{bmatrix}\, .$$
is singular. In \cref{sec:conj} we will show that for $N$ sufficiently big the converse condition is also true.  That is, for any element $s \in T^2(\polyd)$ that satisfies \eqref{eq:rk1cond} there exists a time series $x$ such that $\Dsign^2(x) = s$.

\subsection*{Dimension one varieties}

We now look at the case where $d = 1$ and $h\geq 1$. In this setting, $\MS_1^0$ contains only multisets of the singleton $\{[1]\}$ and  $T^{h}(\polyd)$  has the  dimension $n_{1,h}=|C(h)|= 2^{h-1}$. From this equality, we can represent every $w  \in \mathcal{W}(\MS_1)$ such that $ \Vert w\Vert=h$ with a composition of $h$.   For instance, when $h=5$, we identify $w = [11][1][11]$ with the composition $(2, 1, 2)$ of $5$.

Using this notation, we can easily reinterpret the map $\Dsign^h$ using the theory of quasi-symmetric functions over the field $\KK$, see e.g. \cite{hazewinkel2001algebra}. A classic basis of this subring of formal power series in the  variables $y_{1},y_{2},y_{3},\ldots$ is given for any composition  $\alpha=(\alpha_1, \dots, \alpha_k)$ by the monomial formal series 
$$M_{(\alpha_1, \dots, \alpha_k)} = \sum_{1 \leq i_1 < 
\dots < i_k} y_{i_1}^{\alpha_1} \cdots y_{i_k}^{\alpha_k} \, . $$ 
Denoting by $M_w $ the monomial quasi-symmetric function indexed by the composition of $w\in \mathcal{W}(\MS_1)$, we immediately see that the coefficient  $\langle \Dsign^h (x), w \rangle $ is the evaluation of $M_w$ on $y=\Delta x$ appended with zeroes.

We recall that \cite[Theorem 8.1]{hazewinkel2001algebra} has shown that, the algebra of quasi-symmetric functions, is freely generated over the integers, and indeed gives a basis for this algebra, which is independent of the characteristic of the underlying field together with an explicit dimension computation. This result will be interpreted as a special case of  \cref{thm:dimension} where we compute the dimension a class of varieties containing any variety $\V_{1, h, N}$. 

\begin{smpl}
We analyse here further particular cases with $d = 1$ using the software   \texttt{Macaulay2} \cite{Mac2}.  

For $h = 3$ and $N= 4$, the signature variety is embedded in $\mathbb{P}^3(\mathbb{C})$. Using the parametrisation given in Remark \ref{rk_par}, the variety $ \mathcal{V}_{1,3,4}$  is associated to the rational map $\mathbb{P}^2(\mathbb{C}) \dashrightarrow \mathbb{P}^3(\mathbb{C})$ explicitly given by
$$ [y_1: y_2: y_3] \to [y_1  y_2   y_3: y_1^2  y_2+y_1^2  y_3+y_2^2 y_3: y_1  y_2^2 +y_1  y_3^2 +y_2  y_3^2: y_1^3+ y_2^3+ y_3^3] \, .$$
Using the notation $s_{(1,1,1)}, s_{(1,2)}, s_{(2,1)}, s_{3}$ for the coordinates of $\mathbb{C}^4$, we can see that the variety $\V_{1, 3, 3}$ is generated by one  equation of degree nine. Here is an excerpt of this equation:
\begin{align*}
81 &s_{(1,1,1)}^9+162 s_{(1,1,1)}^8 s_{(1,2)}+351 s_{(1,1,1)}^7 s_{(1,2)}^2+333 s_{(1,1,1)}^6 s_{(1,2)}^3\\&+72 s_{(1,1,1)}^5 s_{(1,2)}^4-63 s_{(1,1,1)}^4 s_{(1,2)}^5-30 s_{(1,1,1)}^3 s_{(1,2)}^6\\&+6 s_{(1,1,1)}^2 s_{(1,2)}^7+
6 s_{(1,1,1)} s_{(1,2)}^8+s_{(1,2)}^9+162 s_{(1,1,1)}^8 s_{(2,1)}-30 s_{(1,1,1)}^2 s_{(1,2)} s_{(2,1)}^6\\&+ \cdots +4 s_{(1,1,1)}^3 s_{(2,1)}^2 s_{3}^4+4 s_{(1,1,1)}^2 s_{(1,2)} s_{(2,1)}^2 s_{3}^4+s_{(1,1,1)} s_{(1,2)}^2 s_{(2,1)}^2 s_{3}^4\, .
\end{align*}
The dimension of the resulting variety $\mathcal{V}_{1,3,4}$ is two.
%\carlo{check again the computations (there was a mistake in the polynomial map)}

For $h = 4$ and $N= 4$, the variety $\V_{1, 4, 4}$ is embedded in $\mathbb{P}^7(\mathbb{C})$ and is generated by $20$ polynomials, whose degree counts are the following:

\begin{center}
\begin{tabular}{l|c|c|c|c}
Degree & 1 & 2 & 3 & 4\\
Quantity &1& 1& 12& 6
\end{tabular}
\end{center}

Note that the linear polynomial arises from the trivial relation 
$$M_{(1, 1, 1, 1)} ( \Delta x, 0, \ldots ) = 0.$$

\end{smpl}
\subsection*{Three steps}
Let us now focus on the case $N = 4$, $d = 2$ and $h = 3$. Using the parametrisation given in Remark \ref{rk_par}, the variety $ \mathcal{V}_{2,3,3}$, we rename some coordinates of $T^3(\polyd)$ as follows:
\begin{align*}
s_{i,j,k}=\langle \Dsign_{\Delta} (y), [i] [j] [k] \rangle\,,& \quad t_{i,j,k}= \langle \Dsign_{\Delta} (y), [ij][k]\rangle\,, \\ u_{i,j,k}= \langle\Dsign_{\Delta}(y), [i][jk]\rangle\,, & \quad v_{i,j,k}=\langle \Dsign_{\Delta} (y), [ijk]\rangle\,.
\end{align*}

Starting from a difference time series \[y = \begin{bmatrix}
a_{1} & b_{1} & c_{1}\\
a_{2} & b_{2} & c_{2}
\end{bmatrix}\,,\] we have the following parametric equations of $\mathcal{V}_{2,3,3}$:
\begin{align*}
s_{i, j, k} =& a_ib_jc_k\\
t_{i, j, k} =& a_ia_jb_k+ a_ia_jc_k+ b_ib_jc_k\\
u_{i, j, k} =& a_ib_jb_k+ a_ic_jc_k+ b_ic_jc_k\\
v_{i, j, k} =& a_ia_ja_k+ b_ib_jb_k+ c_ic_jc_k\, .
\end{align*}

There are 226 minimal generators of the ideal associated to $\mathcal \V_{2, 3, 4}$ of degree at most four. These break down into $58$ quadrics, $74$ cubic and $134$ quartic generators. Here is one of the cubics generating $\mathcal I(\V_{2, 3, 3})$.
\begin{align*}
&s_{121}s_{222}v_{222} - s_{121}t_{222}u_{222}-s_{122}s_{222}v_{122} + s_{122}t_{222}u_{212}\\
&-s_{221}s_{222}v_{122} + s_{221}t_{122}u_{222}+s_{222}^2v_{112}-s_{222}t_{122}u_{212}\, .
\end{align*}

\subsection*{Overview table}

We sum up some dimensions (of the homogeneous ideal), and degrees and also display some of the generators obtained with the software \texttt{Macaulay2}, using the code in \cite{M2_MATHREPO}. We mention that a general description of the varieties $\V_{d, h, N}$ using only computational tools is a computationally hard question, as even for small sizes like in $\V_{2, 2, 4}$, computing a Gr\"obner basis is too expensive.

For the number of generators, we do guarantee that this is the smallest possible number, as this is obtained with \texttt{Macaulay2} using the \textit{mingens} function, which is known to give an optimal set for homogeneous ideals.
Indeed, we construct the variety $\V_{d, h, N}$ in the extended ring with the free variables (corresponding to entries of the matrix), and compute its Gr\"obner basis $\mathcal G$.
Via the \cite[Implicitization theorem in Section 4.2]{michalek2021invitation}, the Gr\"obner basis of the original ideal in the ambient ring of $\V_{d, h, N}$ is a subset of $\mathcal G$, precisely those polynomials that do not have an occurrence of the free variables.
We use this fact in computing a generating set of the ideal.

\begin{center}
\begin{tabular}{|l| c | c | c | l|}
\hline
Variety & Dim & Deg & \# gens & Some generator\\
\hline
\hline
$\V_{1, 3, 3}$ & $2$ & $3^3$ & $1$ & $ 81s_{00}^9+162s_{00}^8s_{01}+351s_{00}^7s_{01}^2+333s_{00}^6s_{01}^3 \cdots  $\\
\hline
$\V_{1, 4, 3}$ & $2$ & $2^{4}$ & $20$ & $ 22050 s_{001} s_{100}^3 + 28284 s_{011} s_{100}^2 s_{101} + 257246 s_{100}^3 s_{101} $\\
& & & & \quad $  + 110804 s_{001} s_{010} s_{101}^2 + 89216 s_{010}^2 s_{101} ^2 + \cdots$\\
\hline
$\V_{2, 2, 3}$ & $5$ & $2 $ & $1$ & $s_{12}^2+2s_{12}s_{21}+s_{21}^2-4s_{11}s_{22}-2s_{22}t_{11}$\\
$ = \V_{2, 2, 4}$& & & & \quad $ -2s_{11}t_{22} -t_{11}t_{22}+2s_{12}t_{12}+2s_{21}t_{12}+t_{12}^2 $\\
\hline
$\V_{2, 3, 2}$ & $3$ & $3^{3}$ & $97$ & $4 t_{111}^2 u_{112} v_{112}^2 -t_{112} v_{111}^2 v_{112}^2+2 u_{111}^2 v_{112}^3$\\
%$\V_{2, 3, 3}$ & $7$ & $28$ & $...er$ & $ s_{12}^2+2s_{12}s_{21}+s_{21}^2-4s_{11}s_{22}-2s_{22}t_{11} $ \\
 & & & & \quad $-t_{111} v_{111} v_{112}^3 - 4 t_{111}^2 u_{112} v_{111} v_{122}+ \cdots$ \\
\hline
$\V_{3, 2, 2}$ & $5$ & $2^{5}$ & $45$ & $t_{23}t_{32}-t_{22}t_{33}, t_{13}t_{32}-t_{12}t_{33}, \cdots $ \\
\hline
\end{tabular}
\end{center}

\begin{rem}\label{rem:power_degree}
We remark that the degree of every variety presented here is a perfect power of a prime.
\end{rem}

\subsection*{Some generators of a high dimension case}

For $\mathcal{V}_{2,3, 4}$ it is not computationally feasible to present a  Gr\"obner basis, but we can perform the Buchberger algorithm and present some low degree terms generating the varieties that are presented in \cite{M2_MATHREPO}.

\section{Lie polynomials and Lie groups}\label{sec_Lie}
In this section, we introduce some fundamental subsets of $T^{\leq h}(\polyd)$: the space of Lie polynomials and the variety of group-like elements in the shuffle and quasi-shuffle context. These will play a role in understanding discrete signatures and, as the name suggests, they will form pairs of Lie algebra and Lie group. Most of the results in this section will follow from general facts on the theory of free Lie algebras \cite{Reutenauer89} which will be adapted to take into account our intrinsic notion of weight of words. The underlying field $\KK$ will just be of characteristic zero.

We define the \textbf{truncated Lie bracket} $[ \,,  ]_h : T^{\leq h}(\polyd) \otimes T^{\leq h}(\polyd) \to T^{\leq h}(\polyd)$ by setting 
 \[[v, w]_h \coloneqq w\otimes_h v - v\otimes_h w\,,\]
and extending it linearly. 
We define  $\mathcal L^{\leq h}(\polyd)$ as the Lie subalgebra of $T^{\leq h}(\polyd)$ generated by  $\polyd^{\leq h}$.  Equivalently, $\mathcal L^{\leq h}(\polyd)$ is the space of iterated Lie brackets starting from the finite-dimensional vector space $\polyd^{\leq h}$. We refer to this Lie algebra as the set of \textbf{weight-$h$ Lie polynomials}. By construction, this Lie algebra will be finitely generated by an explicit family of elements.   In what follows, we introduce for any $w\in  \mathcal{W}(\MS_d)$ the family of Lie polynomials $\lv_{w}\in T((\polyd))$ defined by $\lv_{\varepsilon} = 0$ and  the  recursive condition
 \begin{equation}\label{eq:prims}
 \lv_{w} = \lb p_1, \lv_{u}\rb\,, 
 \end{equation}
 where $w= p_1u$ and $\lb\,, \rb$ is  the Lie bracket without truncation on $T((\polyd))$
\[[\![v, w]\!] \coloneqq w v - vw\,.\]
To describe $\mathcal L^{\leq h}(\polyd)$, we introduce an intermediary Lie algebra that sits between $\mathcal L^{\leq h}(\polyd)$ and $T((\polyd))$. We denote by $\mathfrak{L}^{\leq h}$ the free, nilpotent Lie algebra of
step $h$ over $\polydh$. More explicitly, one has 
 \[\mathfrak{L}^{\leq h}= \polydh\oplus [\![\polydh, \polydh\rb \oplus\cdots \oplus \underbrace{ \lb \polydh ,\lb \polydh \ldots, \lb\polydh ,  \polydh \rb}_{\text{$h-1$ times}}\, ,\]
see  e.g. \cite[Definition 7.25]{frizbook}. Using $\mathfrak{L}^{\leq h}$, we can actually write $\mathcal L^{\leq h}(\polyd)$ as a quotient.
\begin{lm}\label{prop_quotient}
The space $\mathcal L^{\leq h}(\polyd)$ is a Lie algebra which arises as the quotient 
\begin{equation}\label{trunc_pol}
\mathcal L^{\leq h}(\polyd)=\mathfrak{L}^{ \leq h}/ \left(T^{>h}(\polyd )\cap\mathfrak{L}^{ \leq h} \right)\,.
\end{equation}
\end{lm}

\begin{proof}
By definition of $\otimes_h$,  the Lie product $[\,,]_h$ is $h$-step nilpotent. Therefore we can use the fundamental property of $\mathfrak{L}^{\leq h}$ as the free $h$-step nilpotent Lie algebra (see \cite[Remark 7.26]{frizbook}). This gives us an explicit Lie algebra morphism $\mathfrak{i}\colon \mathfrak{L}^{\leq h}\to \mathcal L^{\leq h}(\polyd)$ lifting the identity map on $\polydh $ and sending iterated $\lb\,, \rb$ Lie polynomials into $[\,,]_h$ polynomials. By definition of  $\mathcal L^{\leq h}(\polyd)$, the map $\mathfrak{i}$ is surjective  and one can see that its kernel is $T^{>h}(\polyd )\cap \mathfrak{L}^{\leq h} $. 
\end{proof}

  From the definition of $\mathfrak{L}^{\leq h}$ one has immediately that $\{ \lv_w : ||w|| \leq h\}$ is linear generating set for $\mathcal L^{\leq h}(\polyd)$. Additional properties of $\mathfrak{L}^{\leq h}$ allow us to write down an explicit basis for $\mathcal L^{\leq h}(\polyd)$. Using the total order on $\MS_d$,  we can order all words $ \mathcal{W} (\MS_d)$ with lexicographic order $\geq_{lex}$. A word $I\in \mathcal{W} (\MS_d)$ is said a \textbf{Lyndon word} if it is strictly smaller than all of its rotations. For any Lyndon word $I$, we can uniquely associate an iterated Lie bracket $b(I)\in T((\polyd))$ called \textbf{Lyndon bracket}. The definition of $b(I)$ follows by induction on $|I|$ by setting $b(I)=I$ if $|I|=1$ and  $b(I)= \lb b(I_1), b(I_2)\rb$ where $I= I_1 I_2$ and $I_2$ is the longest Lyndon word appearing as a proper right factor of $I$.

\begin{lm}\label{dim_lie}
The family of Lyndon brackets $b(I)$ where $I$ is a Lyndon word of weight smaller than $h$ is a basis for $\mathcal L^{\leq h}(\polyd) $. The dimension of this Lie algebra equal to 
\begin{equation}\label{eq:Lambda}
\Lambda_{d,h}:=\sum_{l=1}^h\lambda_{d,l}\,,
\end{equation}
where $\lambda_{d,l}$ denotes the number of Lyndon words in $\mathcal W (\MS_d)$ of weight $l$.
\end{lm}
\begin{proof}
Following \cite[Corollary 4.14]{reutenauer1993free},  we know that the family of Lyndon brackets $b(I)$ where $I$ is a Lyndon word of length smaller than $h$  is a basis for $\mathfrak{L}^{\leq h}$. Since any Lyndon bracket $b(I)$ preserves the weight of the underlying word $I$, the result follows from the quotient \eqref{trunc_pol}.
\end{proof}

The modification of the usual grading on word algebras with the introduction of a degree function is uncommon in the study of the coefficients $\lambda_{d,h}$. For instance, Lyndon words arise in the study of words on the alphabet $\{1, \dots, d\}$, taken with the degree function constant equal to one. There, words of length $h$ correspond to words of weight $h$. In that context, the number of Lyndon words of length $h$ is given  by
$$ \frac{1}{h}\sum_{k | h} \mu \left(k\right)d^{h/k} \, ,  $$
where $\mu $ is the M\"obius function on integers, see \cite[Section 0]{reutenauer1993free}. This reflects the fact that the weight and the length play the same role in this alphabet. 

In our context, we do not have the same weight and length as most words, but we can still derive a general formula.
\begin{thm}\label{thm:enum}
The number of Lyndon words in $\mathcal W (\MS_d)$ of weight $h$ is given by
\begin{equation}\label{eq_lyndon_words-weight}
\lambda_{d, h} =  \sum_{k|h} \frac{\mu\left(k\right)}{k} \sum_{\alpha\in C(h/k)} \frac{1}{\ell (\alpha)} \prod_{i=1}^{\ell(\alpha)} \binom{\alpha_i + d - 1}{d - 1} \, . 
\end{equation}
\end{thm}
\begin{proof}
First we note that the weight grading on $T(\polyd)$ gives us the following power series
$$H(x) = \sum_{w \in \mathcal W(\MS_d)} x^{||w|| } = \frac{1}{1 - \sum_{I \in \MS_d} x^{\text{grad}(I)}} \, ,$$
where $\text{grad}(I)$ is the cardinality of multiset $I$. Since for any $h \geq 1 $ there are $\binom{h+d-1}{d-1}$ multisets on $\{1, \dots, d\}$ of cardinality $h$ one has
$$ \sum_{I \in \MS_d} x^{\text{grad}(I)} = \sum_{k\geq 1} \binom{k + d -1 }{d-1} x^ k = (1 - x)^{-d} - 1\, ,$$
thereby obtaining $H(x) = [1 - ( (1-x)^{-d} - 1)]^{-1}$.

On the other hand, the \textbf{Lyndon unique factorization theorem} (see \cite{chen1958free}) guarantees that each word $w \in \mathcal W (\MS_d)$ can be written uniquely as $ w = \tau_1  \ldots  \tau_j \, , $ where $\tau_i $ are Lyndon words with $\tau_1 \geq_{lex} \cdots \geq_{lex} \tau_k$. 
Therefore
\begin{align*}
H(x) &= \sum_{w \in \mathcal W(\MS_d)} x^{||w|| } = \prod_{\substack{\tau \text{ Lyndon word} \\ \tau  \in \mathcal W(\MS_d )}}\left( 1 + x^{||\tau||} + x^{2||\tau||} + x^{3||\tau||} + \cdots  \right)  \\
&= \prod_{\substack{\tau \text{ Lyndon word} \\ \tau  \in \mathcal W(\MS_d)}} ( 1 - x^{||\tau||} )^{-1} = \prod_{h\geq 1} ( 1 - x^{h} )^{-\lambda_{d, h}},
\end{align*}
where we recall that $\lambda_{d,h} $ is the number of Lyndon words $\tau \in \mathcal W(\MS_d)$ of weight $h$. Putting it together, applying $\log $ on both sides and using  the expansion
\[-\log(1-f(x) ) = \sum_{h\geq 1} \frac{1}{h}f(x)^ h\,,\] 
we get the equivalent series expansions
\begin{align*}
-\log(H(x) ) &= \sum_{h\geq 1} - \lambda_{d, h} \log ( 1 - x^{h} )  = \sum_{j, h\geq 1}\frac{1}{j} x^{j h} \lambda_{d, h}  = \\
&= \sum_{h\geq 1} x^h \sum_{k | h} \frac{1}{h/k} \lambda_{d, k}  = \sum_{h\geq 1} \frac{1}{h}x^h \sum_{k |h} k \lambda_{d, k} \,,\\
\\
-\log(H(x) ) &= -\log [ 1 - ( (1-x)^{-d} - 1 ) ]  = \\
&= \sum_{k\geq 1} \frac{1}{k}\left(\sum_{t\geq 1} \binom{t + d -1 }{d-1} x^t \right)^k\\
&= \sum_{h\geq 1} x^h\sum_{\alpha\in C(h)} \frac{1}{\ell(\alpha)}\prod_{i=1}^{\ell(\alpha)} \binom{\alpha_i + d - 1}{d-1}\,.
\end{align*}

Equating both sides, it follows that for each $h\geq 1$ we have
$$ \sum_{k | h} k \lambda_{d, k} =   h \sum_{\alpha\in C(h)} \frac{1}{\ell(\alpha)}\prod_{i=1}^{\ell(\alpha)} \binom{\alpha_i + d - 1}{d-1} \, .$$
By applying M\"obius inversion formula we get 
$$h \lambda_{d, h} = \sum_{n|h} \frac{h}{n}\, \, \mu\left( n \right) \sum_{\alpha\in C(h/n)} \frac{1}{\ell(\alpha)}\prod_{i=1}^{\ell(\alpha)}  \binom{\alpha_i + d - 1}{d-1}\, ,$$
from which the theorem follows.
\end{proof}

\begin{rem}
It is somewhat surprising that the formula \eqref{eq_lyndon_words-weight} always yields integers values. 
It is a corollary of the proof above that the intermediary terms
$$k \sum_{\alpha\in C(k)} \frac{1}{\ell (\alpha)} \prod_i \binom{\alpha_i + d - 1}{d - 1} $$
are also integers.
\end{rem}

This allows us to create the following values of $\lambda_{d, h} $ in \cref{tbl:dimension}.  

\begin{table}
\begin{center}
\begin{tabular}{|l|| c | c | c |c |c |c|c | c | c |}
\hline
$h$ & 1 & 2 & 3 & 4 & 5 & 6 & 7 & 8 &9 \\
\hline
$d = 1$&  1& 2& 2& 3& 6& 9& 18& 30& 56\\
$d = 2$&  2& 4& 12& 32& 92& 256& 772& 2291& 7000 \\
$d = 3$&  3 & 8& 36& 132& 534& 2140& 8982& 38031& 164150 \\
$d = 4$&  4& 16& 80& 380& 1960& 10228& 55352& 304223& 1700712 \\
$d = 5$&  5& 26& 150& 875& 5500& 35335& 234530& 1584845& 10885640 \\
$d = 6$&  6& 36& 252& 1743& 12936& 98686& 776412& 6226008& 50732712  \\
$d = 7$&  7& 48& 392& 3136& 26852& 237160& 2158156& 20028764& 188856934   \\
\hline
\end{tabular}
\end{center}
\caption{Dimension of the Lie algebra $\mathcal L^{\leq h}(\polyd)$  \label{tbl:dimension}}
\end{table}

To pass from Lie algebras to Lie groups, we introduce the polynomial maps
\[\exp_{\otimes_h}\colon T^{\leq h}_0(\polyd) \to T^{\leq h}_1(\polyd) \, \quad \text{and} \quad \log_{\otimes_h}:  T^{\leq h}_1(\polyd ) \to T^{\leq h}_0(\polyd )\] defined by
\begin{equation}\label{exp_log_bullet}
\exp_{\otimes_h}(\vv)\coloneqq\sum_{n \geq 0 } \frac{1}{n!} \vv^{\otimes_h \,  n} \,, \quad \log_{\otimes_h}( v) \coloneqq \sum_{n\geq 0} \frac{(-1)^{n-1}}{n} (\vv- \varepsilon)^{ \otimes_h\, n }\, ,
\end{equation}
where $\vv^{\otimes_h \, n} = \vv \otimes_h \cdots \otimes_h \vv$ stands for the $n$-th truncated tensor product. It is known in the literature, see e.g. \cite[Chapter 3]{reutenauer1993free},  that the equivalent definition of $\exp_{\otimes_h}$ and $\log_{\otimes_h}$ with the product $\otimes$ are well defined map from $T_0((\polyd ))$ to $T_1((\polyd ))$ with the log being the inverse of the exponential. By simple quotienting, we obtain that  $\exp_{\otimes_h}$ and $\log_{\otimes_h}$ are well defined maps such that  $\log_{\otimes_h}= \exp_{\otimes_h}^{-1}$.

We define the \textbf{weight-$h$ free nilpotent Lie group} $\mathcal{G}^{\leq h}(\polyd)$ as the image
\[\mathcal{G}^{\leq h}(\polyd)\coloneqq\exp_{\otimes_h}(\mathcal L^{\leq h}(\polyd))\subset T^{\leq h}_1(\polyd)\,. \]
Similarly as $\mathcal{L}^{\leq h}(\polyd)$, we can indeed describe $\mathcal{G}^{\leq h}(\polyd)$ as an explicit quotient of a more known Lie group. We denote by  $\mathfrak{G}^{\leq h}$ the free nilpotent group of step $h$ over $\polydh$, see \cite[Theorem 7.30]{frizbook}, which is defined as
\[\mathfrak{G}^{\leq h}= \exp_h( \mathfrak{L}^{\leq h})\]
where $\exp_h\colon T((\polyd))\to T((\polyd))$ is defined by
$\exp_h(\vv)\coloneqq\sum_{n \geq 0 }^h \frac{1}{n!} \vv^{\otimes \,  n}\,$, where $\vv^{\otimes \, n} = \vv \otimes \cdots \otimes \vv$.

\begin{lm}
$\mathcal{G}^{\leq h}(\polyd)$ with the operation $\otimes_h$ is a Lie group  which arises as the quotient 
\begin{equation}\label{trunc_pol2}
\mathcal G^{\leq h}(\polyd)=\mathfrak{G}^{\leq h}/ \left(T^{>h}(\polyd )\cap\mathfrak{G}^{\leq h}\right)\,.
\end{equation}
\end{lm}
\begin{proof}
Since we know already
 from the literature that $\mathfrak{G}^{\leq h}$ is a simply connected Lie group with Lie algebra equal to $\mathfrak{L}^{\leq h}$, see \cite[Theorem 7.30]{frizbook}, using the identification in \eqref{trunc_pol}, it is sufficient to check   that the following diagram
\begin{center}
\begin{tikzcd}
\mathfrak{L}^{\leq h} \arrow{d}{\pi} \arrow{r}{\exp_h}
& \mathfrak{G}^{\leq h} \arrow{d}{\pi} \\
\mathfrak{L}^{\leq h}/ \left(T^{>h}(\polyd )\cap\mathfrak{L}^{\leq h} \right) \arrow{r}{\exp_{\otimes_h}}
& \mathfrak{G}^{\leq h}/ \left(T^{>h}(\polyd )\cap\mathfrak{G}^{\leq h} \right)
\end{tikzcd}
\end{center}
commutes, where we denote by $\pi$ the projections on the quotient. This follows trivially from simple consideration regarding  the projection on $T^{>h}(\polyd )$.
\end{proof}

Thanks to these identifications, we can transfer the fundamental  properties of $ \mathfrak{G}^{\leq h}$ and $\mathfrak{L}^{\leq h}$ to $\mathcal{G}^{\leq h}(\polyd)$ and $\mathcal{L}^{\leq h}(\polyd)$. The first one is the Chen-Chow theorem, which expresses elements of $\mathcal G^{\leq h}(\polyd )$ as concatenation of simpler element. See \cite[Theorem 7.28]{frizbook} for the proof on $\mathfrak{G}^{\leq h}$.
\begin{thm}\label{thm:chow}
For any integer $h\geq 1$ and $g \in \mathcal G^{\leq h}(\polyd ) $ there exists an integer $m$ and $\vv_1, \ldots , \vv_m \in  \polydh $ such that 
$$ g = \exp_{\otimes_h}(\vv_1) \otimes_h \cdots \otimes_h \exp_{\otimes_h}(\vv_m) \, . $$
\end{thm}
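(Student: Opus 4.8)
The plan is to deduce Chow's theorem for the discrete-signature group $\mathcal G^{\leq h}(\polyd)$ from the classical Chow--Rashevskii theorem for the free $h$-step nilpotent Lie group, transported through the quotient presentation of \cref{prop_quotient}. First I would recall that, by \cref{prop_quotient}, the Lie algebra $\mathcal L^{\leq h}(\polyd)$ is a quotient of the free $h$-step nilpotent Lie algebra $\mathfrak L^h$ by the ideal $T^{>h}(\polyd)\cap\mathfrak L^h$, via a surjective Lie-algebra morphism $\mathfrak f\colon\mathfrak L^h\to\mathcal L^{\leq h}(\polyd)$. Exponentiating, $\mathfrak f$ induces a surjective group morphism $F\colon G^h\to\mathcal G^{\leq h}(\polyd)$ between the corresponding nilpotent Lie groups, intertwining the two group laws (this is just the Baker--Campbell--Hausdorff formula, which is finite here by nilpotency, applied on both sides: $F(\exp(\vv))=\exp_{\bullet_h}(\mathfrak f(\vv))$). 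Crucially, $\mathfrak f$ restricts to an isomorphism on the generating layer, i.e. $\mathfrak f|_{\polydh}=\id$, since $\polydh$ already sits inside $\mathcal L^{\leq h}(\polyd)$ and is the degree-one part of both algebras.

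Next I would invoke the statement of \cref{thm:chow} as it is known for the free nilpotent group: for any $g'\in G^h$ there exist $m$ and $\vv_1,\dots,\vv_m$ in the generating layer $\polydh$ with $g'=\exp(\vv_1)\cdots\exp(\vv_m)$ in $G^h$; this is \cite[Theorem 7.28]{frizbook} (the cited reference). Given $g\in\mathcal G^{\leq h}(\polyd)$, surjectivity of $F$ gives a preimage $g'\in G^h$ with $F(g')=g$. Applying the free-group Chow decomposition to $g'$, and then pushing forward through $F$ using that $F$ is a group morphism and $\mathfrak f$ fixes $\polydh$, yields
\[
g=F(g')=F\!\big(\exp(\vv_1)\cdots\exp(\vv_m)\big)=F(\exp(\vv_1))\bullet_h\cdots\bullet_h F(\exp(\vv_m))=\exp_{\bullet_h}(\vv_1)\bullet_h\cdots\bullet_h\exp_{\bullet_h}(\vv_m),
\]
which is exactly the claimed form with each $\vv_i\in\polydh$.

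Alternatively — and this may be cleaner to write — one can bypass the quotient and argue directly: by \cref{shuffle_Lie}, $\mathcal G^{\leq h}(\polyd)$ is a connected nilpotent Lie group whose Lie algebra $\mathcal L^{\leq h}(\polyd)$ is, by the remark following \cref{prop_quotient}, generated as a Lie algebra by its degree-one part $\polydh$. The classical Chow--Rashevskii / Chow theorem for connected nilpotent (more generally, bracket-generating) Lie groups then says every group element is a finite product of exponentials of elements of a bracket-generating subspace; applied with the subspace $\polydh$ this gives the statement verbatim. The main obstacle is bookkeeping rather than conceptual: one must be careful that the truncated product $\bullet_h$ and truncated exponential $\exp_{\bullet_h}$ on $T^{\leq h}(\polyd)$ genuinely correspond, under $F$ (equivalently under the isomorphism $T^{\leq h}(\polyd)\cong T^{\leq h}(\polyd^{\leq h})$), to the honest group operations on the free nilpotent group, so that the cited form of Chow's theorem applies without modification; once that identification is in place the proof is essentially a one-line transport of structure, and indeed the theorem is stated here precisely as a citation to \citefriz.
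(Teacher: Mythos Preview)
The paper does not actually prove this theorem: it is stated with a direct citation to \cite[Theorem 7.28]{frizbook} and no proof environment follows. So there is no ``paper's own proof'' to compare against beyond the citation itself.

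Your proposal is therefore more detailed than what the paper provides. You correctly observe in your final sentence that the theorem is stated here precisely as a citation to \citefriz, and both of your suggested arguments --- transporting the free-nilpotent Chow decomposition through the quotient map of \cref{prop_quotient}, or invoking Chow--Rashevskii directly for the bracket-generating subspace $\polydh$ --- are sound ways to justify why the cited result yields the statement in this particular setting. The bookkeeping caveat you flag (that $\bullet_h$ and $\exp_{\bullet_h}$ really are the group operations on a nilpotent Lie group, so that the cited theorem applies verbatim) is exactly the right point to check, and it holds because the truncated tensor algebra is the universal enveloping algebra of $\mathcal L^{\leq h}(\polyd)$ modulo higher-order terms. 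In short: your proposal is correct, and goes beyond the paper, which simply defers to the reference.
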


The second property we mention is also related to a representation of $\mathcal G^{\leq h}(\polyd )$ as an  affine variety. This result follows by applying the classical properties on  free nilpoltent Lie algebras and then the quotient. See \cite[Theorem 3.1, Theorem 3.2]{reutenauer1993free} and  \cite[Lemma 4.1, Lemma 4.2]{amendola2019varieties}.

\begin{thm}\label{shuffle_Lie}
The elements of $\mathcal G^{\leq h}(\polyd )$ and $\mathcal L^{\leq h}(\polyd)$ are characterized by the following relations
\begin{equation}\label{lie_shuffle}
\mathcal L^{\leq h}(\polyd)=\{\vv\in T^{\leq h}_0(\polyd) \colon \quad \langle \vv,u\shuffle k\rangle=0 \quad \text{for all} \;||u|| + ||k||  \leq h \}\,.
\end{equation}
\begin{equation}\label{trunc_groups}
\mathcal G^{\leq h}(\polyd ) =\left\{ \vv \in T^{\leq h}_1(\polyd ) \colon \, \, 
\langle \vv, \uw{w }\shuffle \uw{u }\rangle = \langle \vv, \uw{w }\rangle \langle \vv,\uw{ u}\rangle   \, \, \text{for} \;||w|| + ||u||  \leq h \right\}\,. 
\end{equation}
\end{thm}

We now turn our attention to the quasi-shuffle relations and define an analogous space to the one presented in \cref{shuffle_Lie}, in the quasi-shuffle context.

\begin{defin}\label{def_quasi-shuffle}
For any integer  $h\geq 1$ we define the \textbf{weight-$h$ free quasi-shuffle Lie group} $\hat{\mathcal G}^{\leq h}(\polyd )$ and  \textbf{weight-$h$ quasi-shuffle Lie polynomials} $\hat{\mathcal{L}}^{\leq h}(\polyd)$ as 
\[
\hat{\mathcal G}^{\leq h}(\polyd ) \coloneqq \left\{ \vv \in T^{\leq h}_1(\polyd ) \colon \, \, 
\langle \vv, \uw{w }\qshuffle \uw{u }\rangle = \langle \vv, \uw{w }\rangle \langle \vv,\uw{ u }\rangle   \, \, \text{for all} \;||w|| + || u||  \leq h \right\}\,,\]
\[\hat{\mathcal{L}}^{\leq h}(\polyd)\coloneqq\log_{\otimes_h}(\hat{\mathcal G}^{\leq h}(\polyd ))\subset T^{\leq h}_0(\polyd)\,.\]
\end{defin}

Recall that $\Phi_H$ and $ \Psi_H$, defined in \eqref{Hoff_exp_log}, and their adjoints, are Hopf algebra isomorphisms. Summing up the results in \cite[Theorem 4.2]{hoffman2000} and \cite{Bellingeri2022} one has the following properties:

\begin{thm}\label{prop:phi_iso}
$\hat{\mathcal G}^{\leq h}(\polyd )$ with the operation $\otimes_h$ is a Lie group with $\hat{\mathcal{L}}^{\leq h}(\polyd)$ as  Lie algebra. Moreover, the function $\Phi^*_H$ maps isomorphically $\hat{\mathcal G}^{\leq h}(\polyd) $ to $\mathcal G^{\leq h}(\polyd )$ and $\hat{\mathcal L}^{\leq h}(\polyd) $ to $\mathcal L^{\leq h}(\polyd )$.
The maps $\Phi^*_H, \Psi_H^*$ commute with $\exp_{\otimes_h}, \log_{\otimes_h} $ on these domains.
\end{thm}

The importance of this last Lie group/ Lie algebra pair is important because the truncated discrete signature of a time series is an element of $\hat{\mathcal G}^{\leq h}(\polyd)$, according to \cref{rem_trun_char}. Summing up the relation in a commutative diagram, we can describe the properties of $\Phi^*_H$ and $\Psi^*_H$ in the following diagram

%\vspace*{-0.5cm}

\begin{equation}\label{cd:level_h_lie}
\begin{tikzcd}
& & \hat{\mathcal{L}}^{\leq h}(\polyd) \arrow[rr, bend left=15, "\Phi^{ *}_H" description] \arrow[dd, bend left=30, "\exp_{\otimes_h}" description] & & \mathcal{L}^{\leq h}(\polyd ) \arrow[ll, bend left=15, "\Psi^{*}_H" description] \arrow[dd, bend left=30, "\exp_{\otimes_h}" description] \\ \KK^{d\times N} \arrow[rrd, bend right=15, "\mathscr{S}^{\leq h}" description] & & & &\\
& & \hat{\mathcal G}^{\leq h}(\polyd )  \arrow[rr, bend left=15, "\Phi^{*}_H" description] \arrow[uu, bend left=30, "\log_{\otimes_h} " description] & & \mathcal G^{\leq h}(\polyd )  \arrow[ll, bend left=15, "\Psi^{*}_H" description] \arrow[uu, bend left=30, "\log_{\otimes_h}" description] 
\end{tikzcd}
\end{equation}

%\caption{The weight $h$ free Lie algebras and the weight $h$ Lie groups are connected via $\Phi^{*}$ and $\Psi^{*}$.\label{cd:level_h_lie}}

From an algebraic geometry point of view, we can reinterpret $\hat{\mathcal G}^{\leq h}(\polyd )$ as an affine variety. Using the notation $\mathcal R_{w} \in \KK $ for a word $w\in \mathcal{W}(\MS_d)$ with $||w||\leq h$ to denote the coordinates of $w$ in $T^{\leq h}(\polyd)$, we consider the following ideal of polynomial functions on $T^{\leq h}(\polyd)$
\begin{equation}\label{ideal_affine}
\hat{G}_{d,\leq h}\coloneqq \langle \mathcal R_{w}\mathcal R_{v}- \mathcal R_{w\qshuffle v}\;\colon \text{for all words  $w$ and $v$ s.t.} \;||w|| + || v||  \leq h\rangle\,.
\end{equation}
The zero set of this ideal is exactly $\hat{\mathcal G}^{\leq h}(\polyd )$. Combining the Lie group structure with this affine representation we can indeed give a first description of this variety by computing its dimension.

\begin{thm}\label{thm_dim_1}
The ideal $\hat{G}_{d,\leq h}$ is prime. Its irreducible variety coincides with $\hat{\mathcal G}^{\leq h}(\polyd )$ and its dimension equals to $\Lambda_{d,l} $ .
\end{thm}

\begin{proof}
The affine variety $\hat{\mathcal G}^{\leq h}(\polyd )$ is irreducible because it is the image of the linear space $\mathcal L^{\leq h}(\polyd ) $  under the polynomial map $ \exp_{\otimes_h}\circ\Psi_H^*$. This map has a polynomial inverse, namely $\log_{\otimes_h}\circ\Phi_H^* $. Hence the dimension of $\hat{\mathcal G}^{\leq h}(\polyd )$ agrees with that of $\mathcal L^{\leq h}(\polyd )$, which is given from Lemma \ref{dim_lie}.
\end{proof}

We conclude the section by defining an explicit way to compute the coordinates of each element in the image of $\log_{\otimes_h}$. These are the \textbf{shuffle eulerian map} and \textbf{quasi-shuffle eulerian map}, the maps $e_1^{\shuffle}$, $e_1^{\qshuffle}: T(\polyd) \to T(\polyd)$ defined as
\begin{align*}
e_1^{\shuffle} = \sum_{n\geq 1}\frac{(-1)^{n-1}}{n} \shuffle^{\circ (n-1)} \circ \tilde{\delta}^{\circ (n-1)} \, , \quad
e_1^{\qshuffle} = \sum_{n\geq 1}\frac{(-1)^{n-1}}{n} \qshuffle^{\circ (n-1)} \circ \tilde{\delta}^{\circ (n-1)}\, ,
\end{align*}
where we use the convention that $\qshuffle^{\circ (0)} \circ \tilde{\delta}^{\circ (0)}=\shuffle^{\circ (0)} \circ \tilde{\delta}^{\circ (0)}$ is the projection from $T(\polyd) $ to $\varepsilon \KK$ and  the symbols $\qshuffle^{\circ (k)}$, $\shuffle^{\circ (k)}$ $\tilde{\delta}^{\circ (k)}$ describe  iterated products and reduced coproducts. For instance, one has
\begin{align*}
e_1^{\shuffle}(\uw{\varepsilon}) &= e_1^{\qshuffle}(\uw{\varepsilon}) = 0\,, \quad \quad 
e_1^{\shuffle}([i]) = e_1^{\qshuffle}([i]) = [i] \, ,\\
e_1^{\shuffle}([i][j]) &= \frac{1}{2}([i][j]-[j][i])\,, \quad e_1^{\qshuffle}([i] [j]) = \frac{1}{2}([i][j] -[j] [i]) + \frac{1}{2}[ij]\,.
\end{align*}
These two maps allow us to compute the coordinates of $\log_{\otimes_h}$ on group-like elements.

\begin{prop}\label{lm:adjoint_log}
Let $\vv\in \mathcal G^{\leq h}(\polyd)$ and $\vw\in \hat{\mathcal G}^{\leq h}(\polyd)$. 
For any word $w$ with $||w||\leq h$ one has:
\begin{equation}\label{log_eul}
\begin{split}
\langle \log_{\otimes_h} (\vv), w\rangle=\langle \vv, e_1^{\shuffle}(w)\rangle \, ,\quad 
\langle \log_{\otimes_h} (\vw), w\rangle=&\langle \vw, e_1^{\shuffle}(w)\rangle \, .
\end{split}
\end{equation}
\end{prop}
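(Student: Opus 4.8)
The plan is to establish the duality $\langle \log_{\bullet_h}(\vv), w\rangle = \langle \vv, e_1^{\shuffle}(w)\rangle$ by showing that $e_1^{\shuffle}$ is, on the nose, the adjoint of $\log_{\bullet_h}$ restricted to group-like elements. The starting observation is that for $\vv \in \mathcal G^{\leq h}(\polyd)$, the power $(\vv - \varepsilon)^{\bullet_h\, n}$ appearing in the definition \eqref{exp_log_bullet} of $\log_{\bullet_h}$ pairs against a word $w$ via iterated application of the coproduct: since $\bullet$ is dual to $\delta_\shuffle$ (the coproduct of the Hopf algebra dual to $(T(\polyd),\shuffle,\delta)$), one has $\langle \vv_1 \bullet \cdots \bullet \vv_n, w\rangle = \langle \vv_1 \otimes \cdots \otimes \vv_n, \delta_\shuffle^{(n-1)}(w)\rangle$. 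Moreover $\langle \vv - \varepsilon, \--\rangle$ kills $\varepsilon$, so only the "reduced" part of the iterated coproduct contributes; this is exactly what replacing $\delta_\shuffle^{(n-1)}$ by $\tilde\delta^{(n-1)}$ and $\bullet$ by $\shuffle$ in $e_1^{\shuffle}$ records on the dual side. So the first key step is to make precise that $\langle (\vv-\varepsilon)^{\bullet_h\, n}, w\rangle = \langle \vv^{\otimes n}, \tilde\delta^{(n-1)}(w)\rangle$ using that $\delta_\shuffle = \shuffle^*$ — i.e. transpose the recursive definition \eqref{shuffle_qshuffle} of $\shuffle$ — and then sum over $n$ with the coefficients $(-1)^{n-1}/n$ to get the claim, noting that on $T^{\leq h}$ the sum is finite because $\tilde\delta$ strictly decreases height (every non-empty word has $|w| \le \|w\|_{\he} \le h$, so at most $h$ terms survive).

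The second key step handles the quasi-shuffle case $\langle \log_{\bullet_h}(\vw), w\rangle = \langle \vw, e_1^{\shuffle}(w)\rangle$ — note it is still $e_1^{\shuffle}$, not $e_1^{\qshuffle}$, on the right. Here the cleanest route is to transport along $\Phi^*_H$. By \cref{prop:phi_iso}, $\vw \in \hat{\mathcal G}^{\leq h}(\polyd)$ corresponds to $\vv := \Phi^*_H(\vw) \in \mathcal G^{\leq h}(\polyd)$, and $\Phi^*_H$ commutes with $\log_{\bullet_h}$, so $\log_{\bullet_h}(\vw) = \Psi^*_H(\log_{\bullet_h}(\vv))$. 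Then $\langle \log_{\bullet_h}(\vw), w\rangle = \langle \Psi^*_H \log_{\bullet_h}(\vv), w\rangle = \langle \log_{\bullet_h}(\vv), \Psi_H(w)\rangle$, and by the first part this equals $\langle \vv, e_1^{\shuffle}(\Psi_H(w))\rangle = \langle \Phi^*_H(\vw), e_1^{\shuffle}\Psi_H(w)\rangle = \langle \vw, \Psi^*_H e_1^{\shuffle}\Psi_H(w)\rangle$. So it remains to verify the operator identity $\Psi^*_H \circ e_1^{\shuffle} \circ \Psi_H = e_1^{\shuffle}$ on $T^{\leq h}(\polyd)$; equivalently $e_1^{\shuffle}$ commutes with $\Psi^*_H$ after the appropriate twist. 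This should follow because $e_1^{\shuffle}$ is built from $\shuffle$ and $\tilde\delta$, both of which $\Phi^*_H/\Psi^*_H$ intertwine in a controlled way (the coproduct $\delta$ is shared by both Hopf structures, and $\Psi_H$ is a shuffle-to-quasishuffle algebra map). One can alternatively just say: $e_1^{\shuffle}$ is the adjoint of $\log_{\bullet_h}$ on $\mathcal G^{\leq h}$, which is canonically attached to the $\bullet$ product alone (independent of which deconcatenation-type coproduct we pair with), so the same map serves both groups once we observe $\hat{\mathcal G}^{\leq h} \subset T^{\leq h}_1$ and the $\bullet$-structure is the same.

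I expect the main obstacle to be the first step: carefully justifying the passage from $\bullet_h$-powers of $\vv - \varepsilon$ to the reduced iterated coproduct $\tilde\delta^{(n-1)}$, and checking that truncation (working in $\bullet_h$ on $T^{\leq h}$ rather than $\bullet$ on $T((\polyd))$) does not introduce discrepancies — one must confirm that the terms discarded by the $T^{>h}$-truncation in $\vv^{\bullet_h n}$ are precisely the terms that pair to zero against words of height $\le h$, so the truncated and untruncated pairings agree. A subtle point worth flagging: the identity is stated for genuinely group-like $\vv$ (resp. $\vw$), but the argument above only uses that $\langle \vv - \varepsilon, \varepsilon\rangle = 0$, i.e. $\langle \vv, \varepsilon\rangle = 1$, which holds for all of $T^{\leq h}_1(\polyd)$; so in fact \eqref{log_eul} holds on all of $T^{\leq h}_1$, and group-likeness is only needed elsewhere. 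I would either prove the stronger statement or restrict as stated — I would go with proving it on all of $T^{\leq h}_1(\polyd)$, since that costs nothing and clarifies what is really going on. For the quasi-shuffle half, the secondary obstacle is pinning down the operator identity $\Psi^*_H \circ e_1^{\shuffle} \circ \Psi_H = e_1^{\shuffle}$; if a direct combinatorial check is unpleasant, the abstract argument (same $\bullet$-algebra, adjoint of $\log_{\bullet_h}$ is intrinsic) sidesteps it entirely.
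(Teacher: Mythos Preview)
Your plan for the first identity is the paper's argument: pass from $\langle (\vv-\varepsilon)^{\bullet_h n}, w\rangle$ to $\langle \vv^{\otimes n}, \tilde\delta^{(n-1)}(w)\rangle$ via the $\bullet$/$\delta$ duality, then sum. But you have mislocated the key step, and this leads you to a false claim. The duality at work is between concatenation $\bullet$ and deconcatenation $\delta$ (not $\delta_{\shuffle}$); the shuffle enters only at the next step, where one rewrites
\[
\langle \vv^{\otimes n},\, u_1\otimes\cdots\otimes u_n\rangle \;=\; \langle \vv,u_1\rangle\cdots\langle \vv,u_n\rangle \;=\; \langle \vv,\, u_1\shuffle\cdots\shuffle u_n\rangle,
\]
and this uses precisely the group-like characterisation \eqref{trunc_groups}. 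So your assertion that \eqref{log_eul} holds on all of $T^{\leq h}_1(\polyd)$ is wrong: already $e_1^{\shuffle}(\mathtt 1\bullet\mathtt 1)=0$, while $\langle \log_{\bullet_h}(\vv),\mathtt 1\bullet\mathtt 1\rangle=\langle\vv,\mathtt 1\bullet\mathtt 1\rangle-\tfrac12\langle\vv,\mathtt 1\rangle^2$ is nonzero for generic $\vv\in T^{\leq 2}_1(\polyd)$. Group-likeness is not optional here.

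For the second identity the paper's proof is literally ``the same computations'': replace $\shuffle$ by $\qshuffle$ throughout, using that $\vw\in\hat{\mathcal G}^{\leq h}(\polyd)$ is a $\qshuffle$-character. This produces $\langle \log_{\bullet_h}(\vw),w\rangle=\langle \vw, e_1^{\qshuffle}(w)\rangle$; the $e_1^{\shuffle}$ printed in the second line of \eqref{log_eul} is a typo for $e_1^{\qshuffle}$. You can see the printed version is false by taking $\vw=\Dsign((a))$ for a single vector $a\in\KK^d$ and $w=\mathtt 1\bullet\mathtt 1$: then $\langle\log_{\bullet_h}(\vw),w\rangle=-\tfrac12 a_1^2$ but $\langle\vw,e_1^{\shuffle}(w)\rangle=0$. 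Your transport argument via $\Phi^*_H$ is therefore aimed at a misprint; the operator identity you propose (which, after moving the adjoint correctly, should read $\Phi_H\circ e_1^{\shuffle}\circ\Psi_H=e_1^{\shuffle}$ rather than $\Psi^*_H\circ e_1^{\shuffle}\circ\Psi_H=e_1^{\shuffle}$) fails for the same reason.
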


\begin{proof}
To prove the result, we use the following fact, called the duality between product and coproduct associated to $\otimes $ and $\delta$
$$\langle (\vv - \varepsilon)^{\otimes_h n}, \uw{w} \rangle = \langle \vv^{\otimes n}, \tilde{\delta}^{\circ ( n-1)} \uw{w} \rangle_{T(\polyd)^{\otimes n}} \, ,$$
This identity is a standard result in the Free Lie algebra literature, see e.g. \cite[Prop. 1.9]{reutenauer1993free}. Using the duality above, and applying \cref{shuffle_Lie} we have:
\begin{align*}
\langle\log_{\otimes_h}(\vv) , \uw{w} \rangle &= 
\sum_{n\geq 1} \frac{(-1)^{n-1}}{n} \langle (\vv - \uw{\varepsilon})^{\otimes_h n}, \uw{w} \rangle =
\sum_{n\geq 1} \frac{(-1)^{n-1}}{n} \langle \vv^{\otimes n}, \tilde{\delta}^{\circ (n-1)} \uw{w} \rangle_{T(\polyd)^{\otimes n}}\\
&=\sum_{n\geq 1} \frac{(-1)^{n-1}}{n} \langle \vv, \shuffle^{\circ (n-1)}\tilde{\delta}^{\circ (n-1)} \uw{w} \rangle \\
&=\langle \vv, \sum_{n\geq 1} \frac{(-1)^{n-1}}{n} \shuffle^{\circ (n-1)}\tilde{\delta}^{\circ (n-1)} \uw{w} \rangle = \langle \vv, e_1^{\shuffle} (\uw{w})\rangle \, ,
\end{align*}
which concludes one equality.
The remaining follows via the same computations. 
\end{proof}

The map $e_1^{\shuffle}$ will be crucial in writing down the reachability equations \eqref{eq_reach_eq}.

\section{Universal varieties and Chen-Chow Theorem for discrete signatures}\label{sec:conj}
We pass now to introduce the natural varieties associated with each  homogeneous component of the variety 
$\hat{\mathcal G}^{\leq h}(\polyd )$, by projecting on a specific weight. In what follows,  we denote by  $\pi^h: T^{\leq h}(\polyd) \to T^h(\polyd)$ the canonical projection on this finite-dimensional vector space.  We denote by $\Dsign_{w} $ with $w\in \mathcal{W}(\MS_d)$ $||w||= h$  the coordinates of $T^{\leq h}(\polyd)$ and we set by $\KK[\Dsign^{(h)}]$ the ring of polynomial function on $ T^h(\polyd)$ over a generic field $\KK$ of characteristic zero.
\begin{defin}\label{def:quasi-shuffle-variety}
Fix $d, h$ integers $\geq 1$. We define the \textbf{quasi-shuffle variety}
 $\hat{\mathcal{G}}_{d,h}$ to be the zero set of the ideal $\hat{G}_{d,h}= \hat{G}_{d,\leq h}\cap\mathbb{C}[\Dsign^{(h)}]$  in the projective space $\mathbb{P}^{n_{d,h}-1}(\mathbb{C})$.
\end{defin}
This is trivially a homogeneous prime ideal in the polynomial ring over $T^h(\polyd)$. Moreover, its dimension can be explicitly computed.

\begin{thm}\label{thm:dimension}
For any $d, h$  the dimension of the variety $ \hat{\mathcal{G}}_{d, h}$ is $\Lambda_{d, h}-1$ with $\Lambda_{d, h}$ is given by \eqref{eq:Lambda}.
\end{thm}

\begin{proof}
We follow the strategy laid out in \cite[Theorem 6.1]{amendola2019varieties}.
Specifically, we show that at the level of affine varieties $ \pi^h$ is a generically $h$-to-$1$ map on $\hat{\mathcal G}^{\leq h}(\polyd)$, thereby obtaining that the dimension of the affine zero set of the ideal $  \hat{G}_{d, h}$ is given by the dimension of $\hat{\mathcal G}^{\leq h}(\polyd)$ and the result will follow from Theorem \ref{thm_dim_1}. To show that $\pi^h$ is generically $h$-to-$1$, let $\vv \in \hat{\mathcal{G}}_{d, h}$ such that $\langle \vv, [1]^{\qshuffle h}\rangle \neq 0$, and write
$$\vv = \sum_{\substack{w \in \mathcal W (\MS_d) \\ ||w || = h}} \alpha_{w} \uw{w}\, .$$
If $\vv = \pi^h(\vw)$ for $\vw \in \hat{\mathcal G}^{\leq h}(\polyd)$, then 
$$\langle \vw,  [1]\rangle^h = \langle \vw, [1]^{\qshuffle h}\rangle = \langle \vv, [1]^{\qshuffle h}\rangle \, ,$$
because all elements in $[1]^{\qshuffle h}$ have weight $h$.
This equation determines a non-zero value for $\langle \vw,  [1]\rangle$ up to an $h$-root of unity of $1$.

Now for any $w $ of weight $l < h$, note that
$$\langle \vv, \uw{ w} \qshuffle [1]^{\qshuffle h-l}\rangle  = \langle \vw,  \uw{w} \qshuffle [1]^{\qshuffle h-l}\rangle =
\langle \vw,  \uw{w} \rangle \langle \vw,[1]\rangle^{ h-l} \, .$$
This determines 
$$\langle \vw,  \uw{w }\rangle = \langle \vv,\uw{  w} \qshuffle [1]^{\qshuffle h-l}\rangle/_{\langle \vw,[1]\rangle^{ h-l}}$$
We conclude that for $\vv$ in an open set of $\hat{\mathcal{G}}_{d, h}$, there are $h$ many values of $\vw$ that map to $\vv$. 
\end{proof}

Coming back to the discrete signatures variety, and looking at the equivalent version of the quasi-shuffle varieties for signatures of paths, see \cite[Section 4]{amendola2019varieties}, it turns out that the family of quasi-shuffle varieties $\hat{\mathcal{G}}_{d, h}$ should coincide with the family  $\mathcal V_{d, h}$  of universal varieties but this equality seems much harder to prove. 
The big reason behind this difficulty is the lack of a Chen-Chow theorem in the context of discrete signatures, relating the image of $x\to \Dsign^{\leq h}(x)$ with $\hat{\mathcal G}^{\leq h}(\polyd )$.

\begin{conj}[Chen-Chow theorem for discrete signatures]\label{thm_chen_chow}
Let $\KK$ be an algebraically closed field.
For any $d$,$h$ integers $\geq 1$ and $ g\in \hat{\mathcal G}^{\leq h}(\polyd )$ there exists a time series  $x\in \KK^{d\times N} $ for some $N\geq 1$ such that $\Dsign^{\leq h}(x)= g $.
\end{conj}

As it was already pointed out in \cite[Section 3]{Tapia20}, in the case $\KK= \mathbb{R}$ Conjecture \ref{thm_chen_chow} does not hold because of trivial counterexamples.  
However, if we can establish Conjecture \ref{thm_chen_chow} in the complex field, we would obtain $\V_{d, h}=\hat{\mathcal{G}}_{d, h}$.

\begin{prop}\label{conj:dim}
For any  $d, h$ integers $\geq 1$ one has $\V_{d, h}\subset \hat{\mathcal{G}}_{d, h}$. Moreover, if Conjecture \ref{thm_chen_chow} holds true when $\KK= \mathbb{C}$ then  $\V_{d, h}=\hat{\mathcal{G}}_{d, h}$.
\end{prop}
\begin{proof}
To prove the first inclusion it is sufficient to show that $\V_{d, h, N}\subset \hat{\mathcal{G}}_{d, h}$
for any integer $N\geq 1$. However, by projecting at weight $h$ the quasi-shuffle identity  in Theorem \ref{thm_prop_disc_sig} we immediately see that for any integer $N\geq 1$ one has 
\[\text{Im}\Dsign^h\subset \pi^h( \hat{\mathcal G}^{\leq h}(\polyd ))\]
from which we obtain the first inclusion. From Theorem \ref{thm_chen_chow} together with  Definition \ref{def_quasi-shuffle} we deduce immediately that there exists  an integer $\bar{N}\geq 1$ such that the ideal of the variety $\V_{d, h, \bar{N}}^{\text{af}}$ denoted by $I_{d, h, \bar{N}}$ satisfies 
\[I_{d, h, \bar{N}} \subset \hat{G}_{d,h}\,.\]
By passing to projective varieties, we obtain $\hat{\mathcal{G}}_{d,h}\subset \V_{d, h, \bar{N}}\subset \mathcal{V}_{d,h}$ and we conclude.
\end{proof}

In what follows, we illustrate a strategy to prove Conjecture \ref{thm_chen_chow}, through which we will prove the case $h=2$. 
With the results in section \ref{sec_Lie} we show that \cref{thm_chen_chow} is equivalent to a simpler property on the finite-dimensional vector space $\polyd^{\leq h} $, that we call \textbf{reachability}.

%In the rest of this section we present some important consequences of this conjecture. Specifically, we present some dimension results and an enumeration result.

%The following definition and lemma outline a strategy for showing \cref{conj:dim}.
\begin{defin}\label{defin:reachability}
A given  vector $\vv \in \polyd^{\leq h}$ is said to be \textbf{reachable} if  there exists a time series $x \in \KK^{d \times N}$ for some $N>1$ such that 
\begin{equation}\label{reac_eq_1}
\log_{\otimes_h}\circ\, \Phi^{*}_H \Dsign^{\leq h}(x) = \vv\,.
\end{equation}
\end{defin}

 Even if the map $\log_{\otimes_h}\circ \,\Phi^{*}_H$ in \eqref{reac_eq_1} seems arbitrary, it is natural to use it on $\Dsign^{\leq h}(x)$ because one sees immediately  from  the diagram  \eqref{cd:level_h_lie} that it provides a global chart to project the group $\hat{\mathcal G}^{\leq h}(\polyd)$ onto the vector space $\mathcal L^{\leq h}(\polyd)$, which we described explicitly in section \ref{sec_Lie}. By projecting the equation over a basis of  $\mathcal L^{\leq h}(\polyd)$ we obtain an explicit set of polynomial equation to describe this definition.

\begin{lm}\label{lm:reachable}
Fix $d, h$ integers $\geq 1$. an element $\vv\in \polyd^{\leq h}$ is reachable if and only if there exists a time series $x \in \KK^{d \times N}$ for some $N>1$ that satisfies the following system of equations 
\begin{equation}\label{eq_reach_eq}
\begin{cases}
\langle \Dsign (x), \uw{I}\rangle = \langle \vv, \uw{I}\rangle \\
\langle \Dsign (x), \Phi_H\circ  e_1^{\shuffle} \, (b(J))\rangle = 0
\end{cases}
\end{equation}
for all  $I \in \MS_d$ of weight at most  $h$ and all Lyndon words  $J $ built from $\MS_d$ of weight at most  $h$ such that $|J|\geq 2$ with $b(J)$ the Lyndon bracket associated to it. We call the equations in \eqref{eq_reach_eq} the \textbf{reachability equations}. 
\end{lm}
\begin{proof}
Looking at the diagram  \ref{cd:level_h_lie}, we observe that $\log_{\otimes_h}\circ \,\Phi^{*}_H \Dsign^{\leq h}(x) = \vv$ if and only if $\langle\log_{\otimes_h}\circ \,\Phi^{*}_H\Dsign(x), \bv\rangle =\langle \vv, \bv \rangle $ for all $\bv$ running over a basis of $\mathcal L^{\leq h}(\polyd)$. Thanks to Lemma \ref{dim_lie} one has
\[ \langle \log_{\otimes_h} \circ \,\Phi^{*}_H \Dsign(x), b(K)\rangle =\langle \vv, b(K) \rangle\]  
for all Lyndon words  $K $ of weight at most  $h$ starting from the alphabet $\MS_d$. By using \cref{lm:adjoint_log} the left-hand side in the above equation becomes
\begin{align*}
\langle \log_{\otimes_h}\circ \,\Phi^{*}_H \Dsign(x),b(K) \rangle =&
\langle  \Phi^{*}_H\Dsign(x),e_1^{\shuffle} (b(K)) \rangle\\
 =&
\langle  \Dsign(x),\Phi_H\circ  e_1^{\shuffle}(b(K)) \rangle\, .
\end{align*}
We obtain the desired equations once we note that whenever $|K | = 1$, $K$ is simply an  element of $\MS_d$ of weight at most  $h$, with $b(K)=K$ and  $\Phi_H\circ e_1^{\shuffle}(K)  =K$. Furthermore, the map $\Phi_H \circ e_1^{\shuffle}$ is graded with respect to the  word's length, so $\langle  \vv,\Phi_H\circ  e_1^{\shuffle}(b(K)) \rangle = 0 $ for $|K| > 1$.
\end{proof}
\begin{rem}
The choice of the basis given by Lyndon words to derive the reachability equations is completely arbitrary. Other possible basis for $\mathcal L^{\leq h}(\polyd)$ are studied in \cite[Chapter 4]{reutenauer1993free}, and some may lead to more intuitive structure.
%General set of equations We call the equations in \cref{defin:reachability} the \textbf{reachability} equations. These are split into \textbf{levels} according to the length of $w$. In this way, the reachability equations of level $k$ correspond to all equations where $w$ words have length $k$. Our strategy is to show that all $\vv $ of degree at most $h$ are reachable: the following lemma establishes that this is enough to infer \cref{conj:dim}.
\end{rem}

We now link the notion of reachability with \cref{thm_chen_chow}.

\begin{prop}\label{lm:strategy}
Fix $d, h$ integers $\geq 1$.  \cref{thm_chen_chow} holds with parameters $h$ and $d$ if and only if every element of $\vv \in \polyd^{\leq h}$ is reachable.
%Consider $\Dsign^{\leq h} $ as a map $\Dsign^{\leq h} :(\KK^d)^m \to \hat{\mathcal G}^{\leq h}(\polyd)$.Then, for some integer $m$, we have $\im \Dsign^{\leq h} = \hat{\mathcal G}^{\leq h}(\polyd)$.
\end{prop}

%If $a \in (\KK^d)^{N}, b\in (\KK^d)^{M}$ are two time series, we denote its concatenation by $a|b$.Specifically, it denotes the time s-series in $\KK^d$ with $M+N$ vectors resulting from appending $b$ to $a$. For the proof of this lemma, we use the following fact without proof:

\begin{proof}
Supposing \cref{thm_chen_chow} true we can fix $ \vv\in \polyd^{\leq h}$
and apply  \cref{thm_chen_chow} to the element $\exp_{\otimes_h}\circ \Psi_H^*(\vv)$, which by construction belongs to $\hat{\mathcal G}^{\leq h}(\polyd )$. By inverting the maps $\exp_{\otimes_h}$ and $\Psi_H^*$ we obtain \eqref{reac_eq_1}.

On the other hand, supposing that every element of $ \vv \in \polyd^{\leq h}$ is reachable we derive  \cref{thm_chen_chow}. We fix $g \in \hat{\mathcal G}^{\leq h}(\polyd)$ and consider $ \Phi_H^*(g)\in \mathcal G^{\leq h}(\polyd)$. By applying \cref{thm:chow}  we can find $\vv^1, \dots , \vv^m \in \polyd$ such that
$$\Phi_H^*(g) = \exp_{\otimes_h}(\vv^1) \otimes_h \cdots \otimes_h\exp_{\otimes_h}(\vv^m)\, . $$
Since every element $\vv^j $,  $j=1\,, \ldots\,, m$ is reachable there exists a time series $x(\vv^j)$  such that $\log_{\otimes_h}\circ \Phi^{*}_H \Dsign^{\leq h}(x(\vv^j)) = \vv^j$. Plugging this relation in the equation above, we obtain
$$
\Phi^*_H(g) = \Phi^*_H\Dsign^{\leq h} (x(\vv^1)) \otimes_h\cdots  \otimes_h \Phi^*_H\Dsign^{\leq h} (x(\vv^m))\,. $$
Using the homomorphism property of $\Phi^*_H$ and \eqref{trunc_chen} one has 
$$
\Phi^*_H(g) = \Phi^*_H(\Dsign^{\leq h} (x(\vv^1)|\cdots  | (x(\vv^m)))\,. $$
Because $\Phi^{*}_H$ is an isomorphism, we conclude.
\end{proof}
We display the equations necessary to solve the reachability problem for weight two. Writing every element of $\vv\in \polyd^{\leq 2}$ as a sum
\[\vv= \sum_{i=1}^d \vv^{i}[i] + \sum_{1\leq i\leq j\leq d}\vv^{ij} [ij]\,,\]
and using the properties of the element $e_1^{\shuffle}$, the equations \eqref{eq_reach_eq} become
\begin{equation}\label{level_2eq}
\begin{cases}
\langle \Dsign (x), [i]\rangle = \vv^{i} & 1\leq i\leq d  \\
\langle \Dsign (x),[ij]\rangle = \vv^{ij} & \;\;1\leq i\leq j \leq d
\\
\langle \Dsign (x), [i][j]-[j] [i]\rangle = 0 &\;\;1\leq i< j \leq d
\end{cases}
\end{equation}
%Consider the timeseries $x = x(\vv^1) | \cdots | x(\vv^m) $ in.
%Then \eqref{eq_chen_equation}, together with the fact that $\Phi^*_H$ is an algebra homomorphism (see \cref{eq:phistar}) gives us:
%\[\Phi^*_H\Dsign^{\leq h} (x) = \Phi^*_H\Dsign^{\leq h} (x(\vv^1)) \otimes_h\cdots  \otimes_h \Phi^*_H\Dsign^{\leq h} (x(\vv^m)) = \vw\,.\]
%This shows that the map $\Phi^{*}_H \circ \Dsign^{\leq h}$ is surjective.

\begin{thm}\label{lm:h2constructionX}
Assume $\KK= \mathbb{C}$. Then every element of $\vv \in \polyd^{\leq h}$ is reachable when $h=2$ and $d\geq 1$. 
\end{thm}

\begin{proof}
%By invariance of translation it is sufficient to solve the equations
%\[
%\begin{cases}
%\langle \Dsign (x), [i]\rangle = 0 & [i]=\mathtt{1}\,, \ldots\,, \mathtt{d} \\
%\langle \Dsign (x),\mathtt{ij}\rangle = \vv^{\mathtt{ij}} & [i], [j]\in \{1\,, \ldots\, d\}\,,\; \;[i]\leq [j] \\
%\langle \Dsign (x), [i]\otimes [j]-[j]\otimes [i]\rangle = 0 & [i], [j]\in \{1\,, \ldots\, d\}\,,\;\;[i]< [j]
%\end{cases}
%\]
First, we construct a time series $x = x(\vv)$ that satisfies the first two lines of \eqref{level_2eq}. 
From this solution, we construct another one.
This will also satisfy the last equation, which is amenable to algebraic manipulations on level one.

Let us find the time series $x(\vv)$ solving the first two lines of \eqref{level_2eq} by a dimension argument.  Let $\U$ be a basis of $\polyd^{\leq 2}$. 
Let $Y$ be the variety in $\KK^{|\U|}$ given by the image of the following map from $\KK^d$
$$ z \mapsto (p(z))_{p \in \U} \, . $$
This is an irreducible variety, as observed in the proof of \cref{thm:chain}.

We note that $Y$ is a variety that is not contained in any hyperplane.
Indeed, assume for sake of contradiction that $Y \subseteq \{ \sum_{p \in \mathcal U} \alpha_p z_p \}$, this can be written as the polynomial $\sum_{p \in \mathcal U} \alpha_p p(z)$ is identically zero, which is impossible because $\U$ is a basis.

Say it has dimension $e$. Fix $N = \lceil |\U|/{e} \rceil$. The linear relations $\langle \Dsign (x), [i]\rangle = x_N^i- x_1^i=\vv^{i}$ for $i\in \{1\,, \ldots\,, d \}  $ trace out an affine space $\LL$ of codimension $|\U|$ in $\KK^N$.

Given that the $N$-fold Cartesian product $Y^N$ has dimension $eN \geq |\U|$, and since $Y^N$ is not contained in any hyperplane, the intersection $\LL \cap Y^N$ is non-empty due to the fundamental theorem of analysis.
The time series $x$ in this intersection space is the desired $x(\vv)$.

To establish the equations on level two, for a time  series $x$ let $
\overleftarrow{x}$ be the reversed-time  time series of $x$.
Then, one verifies that for any time series $x$, the time series $ g(x) = x |\overleftarrow{x} $  satisfies the equations of weight two, while having an amenable level one result, that is:
\[
\begin{cases}
\langle \Dsign (g(x)), [i]\rangle = 2\vv^{i} & \; \; 1\leq i\leq d \\
\langle \Dsign (g(x)),[ij]\rangle =2^2 \vv^{ij} & \; \;1\leq i\leq j\leq d \\
\langle \Dsign (g(x)), [i][j]-[j] [i]\rangle = 0 & 1\leq i< j\leq d 
\end{cases}
\]
It follows that $g( 2^{-1}x(\vv) )$ satisfies the desired equations, and the theorem is proven.
\end{proof}
%\subsection{Length three equations\label{sec:len_three_eqs}}
\begin{rem}
We note that the dimension argument presented in the proof above works for any fixed weight, and displays the intuition that given a large enough time series size, a simple dimension argument allows us to construct a solution.  One can conjecture that a similar dimension-bound argument is enough to find a solution to the equations \eqref{level_2eq}.
One can conjecture that a similar dimension-bound argument is enough to find a solution to the equations \eqref{level_2eq}.
We were not able to find one such argument, due to the symmetric constraint $\langle \Dsign (x), [i][j]-[j] [i]\rangle=0$. When $h>2$ the complexity of the reachability equation \eqref{eq_reach_eq} becomes even more challenging without even a clear expression in any order. For example in case $h=3$ writing any generic element $\vv\in \polyd^{\leq 3}$ as the sum
\[\vv= \sum_{i=1}^d \vv^{i}[i] + \sum_{1\leq i\leq j\leq d}\vv^{ij} [ij]+ \sum_{1\leq i\leq j\leq k\leq d}\vv^{ijk}[ijk]\,,\]
then the equations  \eqref{eq_reach_eq} become
\begin{equation}\label{eq_reach_eq3}
\begin{cases}
\langle \Dsign (x), [i]\rangle = \vv^{i} & \;\;1\leq i\leq d  \\
\langle \Dsign (x),[ij]\rangle = \vv^{ij} & \;\;1\leq i\leq j\leq d\\
\langle \Dsign (x),[ijk]\rangle = \vv^{ijk} & \;\;1\leq i\leq j\leq k\leq d\\
\langle \Dsign (x), [i] [j]-[j] [i]\rangle = 0 & \;\;1\leq i< j\leq d\\
\langle \Dsign (x), [i] [jk]-[jk] [i]\rangle = 0 & \;\;1\leq i\leq j\leq k\leq d\\

\langle \Dsign (x),  \Phi_H\circ  e_1^{\shuffle}(\lb\; [i],\lb \;[i],[j]\; \rb\rb )\rangle = 0 & \;\;1\leq i< j\leq d\\
\langle \Dsign (x),  \Phi_H\circ  e_1^{\shuffle}(\lb \lb \;[i], [j]\;\rb,  [k]\;\rb)\rangle = 0 & \;\;1\leq i< j<k\leq d

\end{cases}
\end{equation}
Thanks to the special  identities 
\[e_1^{\shuffle}(\lb\; [i],\lb \;[i],[j]\; \rb\rb )= \lb [i],\lb \;[i],[j]\; \rb\rb\]%=[i][i][j]+ [j][i][i]- 2[i][j][i]
\[e_1^{\shuffle}(\lb \lb \;[i], [j]\;\rb,  [k]\;\rb)=\lb \lb \;[i], [j]\;\rb,  [k]\;\rb\]%=[i][j][k]+[k][j][i]- [k][i][j]- [j][i][k]
 system \eqref{eq_reach_eq3} becomes
\[
\begin{cases}
\langle \Dsign (x), [i]\rangle = \vv^{i} \\
\langle \Dsign (x),[ij]\rangle = \vv^{ij} \\
\langle \Dsign (x),[ijk]\rangle = \vv^{ijk} \\
\langle \Dsign (x), [i] [j]-[j] [i]\rangle = 0 \\
\langle \Dsign (x), [i] [jk]-[jk] [i]\rangle = 0 \\

\langle \Dsign (x),  \lb\; [i],\lb \;[i],[j]\;\rb\rb+ \frac{1}{2}([j][ii]+ [ii][j]- [ij][i]-[i][ij])\rangle = 0\\
\langle \Dsign (x),\lb \lb \;[i], [j]\;\rb,  [k]\;\rb + \frac{1}{2}([ij][k]+ [k][ij]-[j][ki]-[ki][j])\rangle = 0 

\end{cases}
\]
For these equations we are not able to provide the existence of a solution.
It is possible to show that $e_1^{\shuffle}(b(K))\neq K$ for the Lyndon word $K = 1112$ of weight $4$, therefore from height four and above, apart from explicit computations we lack a general combinatorial formula for the reachability equations \eqref{eq_reach_eq} in coordinates.
\end{rem}

\subsection*{Further Work}

This is an exploratory pilot work on some algebraic geometric structures built from invariant quantities on a time series. Therefore, some combinatorial and algebraic questions are left to be resolved in this paper that we wish to tackle in future work, or wish to see resolved by the community. Here is a subset of these questions:

\begin{itemize}
\item  What is the role of the length  between the parameters $N$ and $h$  in $\mathcal{V}_{d,h, N}$ when we consider the identifiability of a time series?  Can we expect results similar to \cite[Section 6]{amendola2019varieties} or \cite{pfeffer2019learning}?
\item What happens in a field of positive characteristic? Can we provide a characterisation of quasi-shuffles in positive characteristic, very much in the same way as Hazewinkel in \cite{hazewinkel2001algebra}?

\item Supposing that now the underlying time series $x$ is random. By construction, $\Dsign^h(x)$ is a well-defined random variable in $\V_{d, h}$. What can we say about the varieties obtained when we compute the expectation of such quantities?
\end{itemize}

\subsection*{Aknowledgments}
CB gratefully acknowledges funding support from the ERC Starting Grant Low Regularity Dynamics via Decorated Trees (LoRDeT). The views and opinions expressed are those of the author(s) only and do not necessarily reflect those of the European Union or the European Research Council. Neither the European Union nor the granting authority can be held responsible for them. He also acknowledges the support of TU Berlin, where he was employed when this project was initiated. RP was supported by the Max Planck Institute for Mathematics in the Sciences during part of this project. Both authors would like to thank Sylvie Paycha and Bernd Sturmfels for fruitful conversations. We are also grateful to Ángel Ríos Ortiz, Pierpaola Santarsiero, and Nikolas Tapia for their suggestions and comments on discrete signatures, tensor algebras, and algebraic varieties. The authors thank the reviewers for their helpful comments, in particular for drawing attention to \cref{rem:power_degree}.

\bibliographystyle{alpha}
\bibliography{bibli}

\begin{thebibliography}{DEFT20}

\bibitem[AFS19]{amendola2019varieties}
Carlos Am{\'e}ndola, Peter Friz, and Bernd Sturmfels.
\newblock Varieties of signature tensors.
\newblock In {\em Forum of Mathematics, Sigma}, volume~7, page e10. Cambridge
  University Press, 2019.

\bibitem[BFPP22]{Bellingeri2022}
Carlo Bellingeri, Peter~K. Friz, Sylvie Paycha, and Rosa Prei\ss.
\newblock Smooth rough paths, their geometry and algebraic renormalization.
\newblock {\em Vietnam J. Math.}, 50(3):719--761, 2022.

\bibitem[BP20]{borga2020feasible}
Jacopo Borga and Raul Penaguiao.
\newblock The feasible region for consecutive patterns of permutations is a
  cycle polytope.
\newblock {\em Algebraic Combinatorics}, 3(6):1259--1281, 2020.

\bibitem[BP25]{M2_MATHREPO}
Carlo Bellingeri and Raul Penaguiao.
\newblock {MATHREPO page Discrete Signature Varieties}.
\newblock \url{https://mathrepo.mis.mpg.de/DiscreteSignatureVarieties/}, 2025.
\newblock Accessed: 2025-05-05.

\bibitem[CFL58]{chen1958free}
K.-T. Chen, R.~H. Fox, and R.~C. Lyndon.
\newblock Free differential calculus. {IV}. {T}he quotient groups of the lower
  central series.
\newblock {\em Ann. of Math. (2)}, 68:81--95, 1958.

\bibitem[Che57]{chen1957integration}
Kuo-Tsai Chen.
\newblock Integration of paths, geometric invariants and a generalized
  baker-hausdorff formula.
\newblock {\em Annals of Mathematics}, pages 163--178, 1957.

\bibitem[DEFT20]{diehl_tapia_EF_2020}
Joscha Diehl, Kurusch Ebrahimi-Fard, and Nikolas Tapia.
\newblock Iterated-sums signature, quasisymmetric functions and time series
  analysis.
\newblock {\em S\'{e}m. Lothar. Combin.}, 84B:Art. 86, 12, 2020.

\bibitem[DET20]{Tapia20}
Joscha Diehl, Kurusch {Ebrahimi-Fard}, and Nikolas Tapia.
\newblock Time-warping invariants of multidimensional time series.
\newblock {\em Acta Appl. Math.}, 170:265--290, 2020.

\bibitem[DSS08]{drton2008lectures}
Mathias Drton, Bernd Sturmfels, and Seth Sullivant.
\newblock {\em Lectures on algebraic statistics}, volume~39.
\newblock Springer Science \& Business Media, 2008.

\bibitem[FH20]{friz2020course}
Peter~K. Friz and Martin Hairer.
\newblock {\em A Course on Rough Paths, with introduction to Regularity
  Structures (2nd extended edition)}.
\newblock Springer International Publishing, 2020.

\bibitem[FV10]{frizbook}
Peter~K. Friz and Nicolas Victoir.
\newblock {\em Multidimensional stochastic processes as rough paths}, volume
  120 of {\em Cambridge Studies in Advanced Mathematics}.
\newblock Cambridge University Press, Cambridge, 2010.
\newblock Theory and applications.

\bibitem[GS]{Mac2}
Daniel Grayson and Michael Stillman.
\newblock Macaulay2: a software system for research in algebraic geometry.
\newblock available at \url{http://www.math.uiuc.edu/Macaulay2}.

\bibitem[Haz01]{hazewinkel2001algebra}
Michiel Hazewinkel.
\newblock The algebra of quasi-symmetric functions is free over the integers.
\newblock {\em Advances in Mathematics}, 164(2):283--300, 2001.

\bibitem[Hof00]{hoffman2000}
Michael~E. Hoffman.
\newblock Quasi-shuffle products.
\newblock {\em J. Algebraic Combin.}, 11(1):49--68, 2000.

\bibitem[LCL07]{lyons2007differential}
Terry~J Lyons, Michael Caruana, and Thierry L{\'e}vy.
\newblock {\em Differential equations driven by rough paths}.
\newblock Springer, 2007.

\bibitem[MR89]{Reutenauer89}
Guy Melan\c{c}on and Christophe Reutenauer.
\newblock Lyndon words, free algebras and shuffles.
\newblock {\em Canadian Journal of Mathematics. Journal Canadien de
  Math\'{e}matiques}, 41(4):577--591, 1989.

\bibitem[MS05]{miller2005combinatorial}
Ezra Miller and Bernd Sturmfels.
\newblock {\em Combinatorial commutative algebra}, volume 227.
\newblock Springer, 2005.

\bibitem[MS21]{michalek2021invitation}
Mateusz Micha{\l}ek and Bernd Sturmfels.
\newblock {\em Invitation to nonlinear algebra}, volume 211.
\newblock American Mathematical Soc., 2021.

\bibitem[Pen20]{penaguiao2020algebraic}
Raul Penaguiao.
\newblock {\em Algebraic and geometric studies of combinatorial substructures
  and chromatic invariants}.
\newblock PhD thesis, UNIVERSITY OF ZURICH, 2020.

\bibitem[Pen22]{penaguiao2022pattern}
Raul Penaguiao.
\newblock Pattern {H}opf algebras.
\newblock {\em Annals of Combinatorics}, 26(2):405--451, 2022.

\bibitem[PSS19]{pfeffer2019learning}
Max Pfeffer, Anna Seigal, and Bernd Sturmfels.
\newblock Learning paths from signature tensors.
\newblock {\em SIAM J. Matrix Anal. Appl.}, 40(2):394--416, 2019.

\bibitem[Reu93]{reutenauer1993free}
Christophe Reutenauer.
\newblock {\em Free Lie Algebras}.
\newblock LMS monographs. Clarendon Press, 1993.

\end{thebibliography}

\end{document}